\DeclareMathOperator{\Aut}{Aut}
\DeclareMathOperator{\PSL}{PSL}
\DeclareMathOperator{\PGL}{PGL}
\DeclareMathOperator{\ag}{AG}
\DeclareMathOperator{\pg}{PG}
\DeclareMathOperator{\PGaL}{P\Gamma L}
\DeclareMathOperator{\AGL}{AGL}
\DeclareMathOperator{\Sym}{Sym}
\DeclareMathOperator{\alt}{A}
\DeclareMathOperator{\s}{S}
\DeclareMathOperator{\mg}{M}
\renewcommand{\leq}{\leqslant}
\renewcommand{\geq}{\geqslant}
\newcommand{\ra}{\rightarrow}
\newcommand{\F}{\mathbb F}
\newcommand{\G}{\mathcal{G}}
\newcommand{\B}{\mathcal B}
\newcommand{\T}{\mathcal T}
\newcommand{\Z}{\mathbb Z}
\theoremstyle{plain}
\newtheorem{lemma}{Lemma}
\newtheorem{theorem}[lemma]{Theorem}
\theoremstyle{definition}
\newtheorem{definition}[lemma]{Definition}
\newtheorem{example}[lemma]{Example}
\newtheorem{problem}[lemma]{Problem}
\numberwithin{equation}{section}
\numberwithin{lemma}{section}
\begin{document}

\title{Neighbour-transitive codes in {K}neser graphs\thanks{This work has been supported by the Croatian Science Foundation under the project 5713.\\ 
{\bf Key words:} neighbour-transitive code, completely transitive code, Kneser graph\\
{\bf 2020 Mathematics subject classification:} 05E18,20B25}}
\author{Dean Crnkovi{\'c}, Daniel R. Hawtin, Nina Mostarac and Andrea {\v S}vob}
\date{
 \small{
  \emph{
   Faculty of Mathematics, University of Rijeka\\
   51000 Rijeka, Croatia\\
  }
  \vspace{0.25cm}
  \href{mailto:deanc@math.uniri.hr}{deanc@math.uniri.hr}\hspace{0.5cm} \href{mailto:dhawtin@math.uniri.hr}{dhawtin@math.uniri.hr}\\ 
  \href{mailto:nmavrovic@math.uniri.hr}{nmavrovic@math.uniri.hr}\hspace{0.5cm} \href{mailto:asvob@math.uniri.hr}{asvob@math.uniri.hr}\\
  \vspace{0.25cm}
 }
\today
}

\maketitle

\begin{abstract}
 A \emph{code} $C$ is a subset of the vertex set of a graph and $C$ is \emph{$s$-neighbour-transitive} if its automorphism group ${\rm Aut}(C)$ acts transitively on each of the first $s+1$ parts $C_0,C_1,\ldots,C_s$ of the \emph{distance partition} $\{C=C_0,C_1,\ldots,C_\rho\}$, where $\rho$ is the \emph{covering radius} of $C$. While codes have traditionally been studied in the Hamming and Johnson graphs, we consider here codes in the Kneser graphs. Let $\Omega$ be the underlying set on which the Kneser graph $K(n,k)$ is defined. Our first main result says that if $C$ is a $2$-neighbour-transitive code in $K(n,k)$ such that $C$ has minimum distance at least $5$, then $n=2k+1$ ({\em i.e.,} $C$ is a code in an odd graph) and $C$ lies in a particular infinite family or is one particular sporadic example. We then prove several results when $C$ is a neighbour-transitive code in the Kneser graph $K(n,k)$. First, if ${\rm Aut}(C)$ acts intransitively on $\Omega$ we characterise $C$ in terms of certain parameters. We then assume that ${\rm Aut}(C)$ acts transitively on $\Omega$, first proving that if $C$ has minimum distance at least $3$ then either $K(n,k)$ is an odd graph or ${\rm Aut}(C)$ has a $2$-homogeneous (and hence primitive) action on $\Omega$. We then assume that $C$ is a code in an odd graph and ${\rm Aut}(C)$ acts imprimitively on $\Omega$ and characterise $C$ in terms of certain parameters. We give examples in each of these cases and pose several open problems.
\end{abstract}

\section{Introduction}

Let $\varGamma$ be a simple connected graph on finitely many vertices. For vertices $\alpha$ and $\beta$ of $\varGamma$ denote the graph distance between $\alpha$ and $\beta$ by $d(\alpha,\beta)$. A \emph{code} $C$ in $\varGamma$ is a subset of the vertex set of $\varGamma$, the elements of which are called \emph{codewords}. We assume throughout that $|C|\geq 2$. The \emph{minimum distance} $\delta$ of $C$ is the smallest distance, in the graph $\varGamma$, between two distinct codewords of $C$. The \emph{set of $i$-neighbours} of $C$, denoted $C_i$, is the subset of all those vertices $\gamma$ of $\varGamma$ for which there exists $\alpha\in C$ such that $d(\alpha,\gamma)=i$ and $d(\beta,\gamma)\geq i$, for all $\beta\in C$. The \emph{distance partition} of $C$ is then the partition $\{C,C_1,C_2,\ldots,C_\rho\}$ of the vertex set of $\varGamma$, where $\rho$ is the \emph{covering radius} of $C$, that is, the maximum value of $i$ for which $C_i$ is non-empty. The automorphism group $\Aut(C)$ of $C$ is the setwise stabiliser of $C$ inside $\Aut(\varGamma)$. We consider the following symmetry conditions:

\begin{definition}\label{sneighbourtransdef}
 Let $C$ be a code with covering radius $\rho$ in a graph $\varGamma$, let $G\leq\Aut(C)$ and let $i\in\{1,\ldots,\rho\}$. Then $C$ is said to be
 \begin{enumerate}[(1)]
  \item \emph{$(G,i)$-neighbour-transitive} if $G$ acts transitively on each of the sets $C,C_1,\ldots, C_i$,
  \item \emph{$G$-neighbour-transitive} if $C$ is $(G,1)$-neighbour-transitive, and,
  \item \emph{$G$-completely transitive} if $C$ is $(G,\rho)$-neighbour-transitive.
 \end{enumerate}
 If $G=\Aut(C)$ we simply say $C$ is, in each respective part, (1) \emph{$i$-neighbour-transitive}, (2) \emph{neighbour-transitive} and (3) \emph{completely transitive}.
\end{definition}

The study of codes in general graphs was proposed in 1973 \cite{Biggs1973289,delsarte1973algebraic} and has gained interest recently, for example, in \cite{huang2018perfect,krotov2016perfect}. Neighbour-transitive codes have been investigated in Hamming graphs \cite{gillespieCharNT}, where a characterisation of one class of these codes is obtained. Stronger results were obtained in \cite{minimal2nt} for binary (that is, alphabet size two) Hamming graphs for $2$-neighbour-transitive codes, where all such codes with minimum distance $5$ are characterised. Completely transitive codes are a subclass of completely regular codes (see \cite{borges2019completely}) and completely transitive codes in Hamming graphs have been studied, for instance, in \cite{borges2001nonexistence,Giudici1999647}. In \cite{bailey2020classification}, the characterisation \cite{minimal2nt} is applied in order to classify binary completely transitive codes in Hamming graphs with certain classes of automorphism group and having minimum distance at least $5$. Neighbour-transitive codes in Johnson graphs were considered by Praeger {\em et al.}~\cite{bamberg2022codes,liebler2014neighbour}, with their results nearing a full classification. Several results have been obtained for neighbour-transitive codes in the incidence graphs of generalised quadrangles \cite{NTcodesGQs}, where such codes were found to have interesting geometric properties. 

Let $\Omega$ be the underlying set of the Kneser graph $\varGamma=K(n,k)$, so that $|\Omega|=n$, the vertices of $\varGamma$ are ${\Omega \choose k}$ and two vertices $\alpha,\beta\in V\varGamma$ are defined to be adjacent if $\alpha$ and $\beta$ are disjoint. In order that $\varGamma$ is not a complete graph and has valency at least $2$, we assume throughout that $2\leq k\leq (n-1)/2$. If $n=2k+1$ then $\varGamma$ is the \emph{odd graph} $O_{k+1}$. Our first main result concerns $2$-neighbour-transitive codes in $\varGamma$. Note that, for $n=23$, the \emph{endecads} are the subsets of $\Omega$ with characteristic vectors corresponding to the weight $11$ codewords of the perfect binary Golay code in the Hamming graph (see \cite[Page 71]{conway1985atlas}).

\begin{theorem}\label{Kneser2NTdelta5}
 Let $C$ be a $2$-neighbour-transitive code in $\varGamma=K(n,k)$ with minimum distance $\delta\geq 5$. Then $n=2k+1$, and hence $\varGamma$ is the odd graph $O_{k+1}$, and one of the following holds.
 \begin{enumerate}[{\rm (1)}]
  \item $\Aut(C)\cong\mg_{23}$ with $n=23$ and $C$ consists of the endecads.
  \item $\Aut(C)\cong\PGaL_d(2)$, where $d\geq 5$, $\Omega$ is the set of all points, and $C$ is the set of all hyperplanes, in $\pg_{d-1}(2)$.
 \end{enumerate}
\end{theorem}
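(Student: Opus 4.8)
The plan is to work inside $\Aut(\varGamma)\cong\Sym(\Omega)$, the full automorphism group of the Kneser graph in this parameter range, so that $G:=\Aut(C)$ is a permutation group on $\Omega$ and $C$ is a single $G$-orbit of $k$-subsets. First I would record the graph distance in terms of intersection size: for vertices $A,B$ with $|A\cap B|=i$ one has $d(A,B)=\min\{2\lceil(k-i)/(n-2k)\rceil,\ 2\lceil i/(n-2k)\rceil+1\}$, which when $n=2k+1$ collapses to $\min\{2(k-i),\,2i+1\}$. Hence $\delta\geq5$ forces every pair of codewords to meet in $i$ points with $n-2k+1\le i\le 5k-2n-1$, an interval that is non-empty only if $n\le(7k-2)/3$. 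Since $\delta\geq 5\geq 3$, no vertex is adjacent to two codewords, so each neighbour of $C$ has a unique nearest codeword; neighbour-transitivity then forces, for each codeword $A$, the stabiliser $G_A$ to be transitive on the set of all $k$-subsets of $\Omega\setminus A$. Equivalently, the permutation group induced by $G_A$ on the $(n-k)$-set $\Omega\setminus A$ is $k$-homogeneous.

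This homogeneity is the engine of the whole argument. Because a $k$-homogeneous group on $n-k$ points is also $(n-2k)$-homogeneous, and because $n\le(7k-2)/3$ gives $n-2k\le(k-2)/3<k$ and $2(n-2k)\le n-k$, the induced group is in fact $t$-homogeneous for $t=n-2k$ with $2t\le n-k$. When $n=2k+1$ this is merely transitivity of $G_A$ on the $k+1$ points of $\Omega\setminus A$; but when $n>2k+1$ we have $t\geq2$, and this is what I would exploit to show that $n>2k+1$ is impossible. For $t\geq 6$ the Livingstone--Wagner and Kantor theorems make the induced group at least $t$-transitive, hence containing the alternating group on $\Omega\setminus A$; this renders $G$ so transitive on $\Omega$ that its orbit $C$ must contain $k$-subsets at distance less than $5$, contradicting the minimum-distance hypothesis. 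The bounded cases $2\le n-2k\le5$ are the genuinely delicate ones, and here I would deploy the full force of $2$-neighbour-transitivity: transitivity of $G$ on the distance-$2$ set $C_2$ constrains the intersection patterns between codewords and their distance-$2$ vertices tightly enough to eliminate each case. With $n>2k+1$ excluded, we obtain $n=2k+1$, so $\varGamma=O_{k+1}$.

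Now working in the odd graph, I would promote $G$ to a $2$-transitive group on $\Omega$. One first reduces to $G$ transitive on $\Omega$ (the alternative being a set of points common to all codewords, which $\delta\geq5$ together with the homogeneity above rules out), and then upgrades to $2$-homogeneity, hence primitivity, using $2$-neighbour-transitivity: the vertices of $C_2$ have the shape $(A\setminus\{a\})\cup\{x\}$ with $a\in A$, $x\notin A$, and transitivity on these produces symmetry on pairs of points of $\Omega$. Finally, the $2$-homogeneous-but-not-$2$-transitive groups are solvable subgroups of $\AGaL_1(q)$, and the constraint $2\le i\le k-3$ on codeword intersections eliminates them, leaving $G$ genuinely $2$-transitive of degree $n=2k+1$.

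The last step is to run through the classification of finite $2$-transitive groups. For each candidate $G$ of degree $n=2k+1$, the code $C$ is a $G$-orbit of $k$-subsets whose pairwise intersections all lie in $[2,k-3]$ and which is $2$-neighbour-transitive; the symmetric and alternating groups are excluded immediately, since their orbit on $k$-subsets contains disjoint pairs, and the minimum-distance condition discards most remaining orbits (for example the heptads of $\mg_{23}$, the blocks of $S(4,7,23)$, which meet in too few points and so lie at distance $2$). A case-by-case examination of the affine and almost-simple families then leaves exactly the endecads of $\mg_{23}$ on $23$ points and the hyperplanes of $\pg_{d-1}(2)$ under $\PGaL_d(2)$ on $2^d-1$ points with $d\geq5$, and one checks directly that both genuinely satisfy $\delta\geq5$ and $2$-neighbour-transitivity. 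I expect the two hardest parts to be the elimination of the small values of $n-2k$ in the second step and the final sweep through the $2$-transitive groups; neither rests on a single clean idea, but rather on careful combinatorial bookkeeping constrained by the intersection interval $[2,k-3]$ and by $C_2$-transitivity.
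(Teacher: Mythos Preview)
Your overall strategy is sound and would eventually succeed, but the paper takes a markedly shorter route at precisely the two places you flag as hardest.

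For the reduction to $n=2k+1$, the paper does not need the homogeneity analysis or any case split on $t=n-2k$. It simply observes that, since $\delta\geq5$, the stabiliser $G_\alpha$ must act transitively on $\varGamma_2(\alpha)$, and that whenever $n\geq 2k+2$ one can exhibit explicit vertices $\beta_1,\beta_2\in\varGamma_2(\alpha)$ with $|\alpha\cap\beta_1|=k-1$ and $|\alpha\cap\beta_2|=k-2$: take $\alpha=\{1,\ldots,k\}$, $\beta_1=\{2,\ldots,k+1\}$, $\beta_2=\{3,\ldots,k+2\}$, with common neighbour $\gamma=\{k+3,\ldots,2k+2\}$ witnessing distance~$2$. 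Since $G_\alpha\leq\Sym(\alpha)\times\Sym(\overline\alpha)$ preserves $|\alpha\cap\cdot|$, these two cannot lie in a single $G_\alpha$-orbit. This disposes of every $n>2k+1$ at once, so the Livingstone--Wagner/Kantor argument for large $t$ and the ``delicate'' small-$t$ analysis are both unnecessary.

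For the classification once $n=2k+1$, the paper avoids establishing $2$-transitivity on $\Omega$ and rerunning the $2$-transitive classification from scratch. Instead it observes that a $2$-neighbour-transitive code in $O_{k+1}$ with $\delta\geq5$ is automatically a neighbour-transitive code in the Johnson graph $J(2k+1,k)$ (same vertex set, same automorphism group $\Sym(\Omega)$) with Johnson minimum distance at least~$3$: Johnson-adjacent vertices are exactly those at $O_{k+1}$-distance~$2$, so $C_2$ in the odd graph coincides with $C_1$ in the Johnson graph. The paper then invokes the existing classification of neighbour-transitive codes in Johnson graphs due to Liebler--Praeger and Neunh\"offer--Praeger, and filters their tables by the conditions $|\Omega|=2k+1$ and odd-graph minimum distance $\geq5$. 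Your plan to prove $2$-transitivity directly and sweep the $2$-transitive list by hand would essentially reproduce that published work; the transfer to the Johnson graph is the shortcut you are missing.
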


Given the result above, we next consider weaker condition of neighbour-transitivity for codes in Kneser graphs. Let $C$ be a neighbour-transitive code in $K(n,k)$. It is common to analyse group actions successively by whether they are (i) intransitive, (ii) transitive but imprimitive, and (iii) primitive. We roughly follow this approach concerning the induced action of $\Aut(C)$ on $\Omega$. For the intransitive case, we have the following result. Note that we call a code $C$ \emph{trivial} if $|C|=1$.

\begin{theorem}\label{intransitiveParamThm}
 Let $C$ be a non-trivial neighbour-transitive code in $K(n,k)$ with minimum distance $\delta$ and suppose that $\Aut(C)$ acts intransitively on $\Omega$. Then $\Aut(C)$ has precisely two non-empty orbits on $\Omega$, say $U$ and $V$, and one of the following holds (up to interchanging $U$ and $V$):
 \begin{enumerate}[{\rm (1)}]
  \item $\delta=1$, $|U|=1$ and $\alpha\subseteq V$ for all $\alpha\in C$.
  \item $\delta=1$, $n=2k+1$, $|U|$ is even and $|\alpha\cap U|=|U|/2$ for all $\alpha\in C$.
  \item $\delta\geq 2$, $|U|<k$ and $U\subseteq \alpha$ for all $\alpha\in C$.
 \end{enumerate}
 Moreover, in each case, an infinite family of examples exists (see Lemma~\ref{lem-intransExamples}).
\end{theorem}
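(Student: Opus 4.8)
The plan is to exploit the fact that, since $2\le k\le (n-1)/2$, the graph $K(n,k)$ is neither complete nor of valency $1$, so its automorphism group is $\Sym(\Omega)\cong S_n$; hence $\Aut(C)\le\Sym(\Omega)$ and its action on $\Omega$ is exactly the action I must control. Write $\Omega_1,\dots,\Omega_r$ for the orbits of $G:=\Aut(C)$ on $\Omega$, so $r\ge 2$ by the intransitivity hypothesis. The first routine step records that, because $G$ is transitive on $C$, the intersection numbers $a_i:=|\alpha\cap\Omega_i|$ are independent of the codeword $\alpha$, and likewise $b_i:=|\gamma\cap\Omega_i|$ is independent of $\gamma\in C_1$ by transitivity on $C_1$, with $\sum_i a_i=\sum_i b_i=k$. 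I would then make the elementary but central observation that, for a fixed codeword $\alpha$, every $k$-subset $\gamma$ of $\Omega\setminus\alpha$ is a graph-neighbour of $\alpha$, hence is either itself a codeword (forcing $\delta=1$) or lies in $C_1$. The proof then splits according to whether $\delta\ge 2$ or $\delta=1$.

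For $\delta\ge 2$, no two codewords are disjoint, so every $k$-subset of $\Omega\setminus\alpha$ lies in $C_1$ and thus carries the single orbit-pattern $(b_i)$. A short transportation-type lemma shows that an $(n-k)$-set can have all of its $k$-subsets sharing one orbit-pattern only when it meets a single orbit; that is, writing $m_i:=|\Omega_i|-a_i$ and using $n-k>k$, all but one of the $m_i$ must vanish. Hence every orbit except one, say $V$, is contained in $\alpha$, and therefore in the common intersection $W:=\bigcap_{\beta\in C}\beta$. Since the pointwise stabiliser $\Sym(W)\le\Sym(\Omega)$ fixes every codeword and so lies in $G$, the set $W$ is a single $G$-orbit; as $W$ already contains all orbits other than $V$, this forces $r=2$, with $U:=W$ and $V$ the two orbits, $U\subseteq\alpha$ for every $\alpha\in C$, and $|U|<k$ by non-triviality. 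This is conclusion~(3).

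For $\delta=1$ the analysis is more delicate, and this is where I expect the real work. Disjoint codewords now exist, so $W=\emptyset$; consequently no orbit is contained in a codeword, giving $m_i\ge 1$ for every $i$, and moreover $2a_i\le|\Omega_i|$ since some codeword disjoint from $\alpha$ has the same orbit-pattern. The key structural input is that, for each codeword $\alpha$, every $k$-subset of $\Omega\setminus\alpha$ has orbit-pattern equal to $(a_i)$ (if a codeword) or to $(b_i)$ (if in $C_1$); thus at most two distinct patterns occur among the $k$-subsets of the $(n-k)$-set $\Omega\setminus\alpha$. Counting the integer points of the associated transportation polytope — distributing $k$ among $r$ bins of capacities $m_1,\dots,m_r\ge1$ summing to $n-k\ge k+1$ — shows that three or more bins always yield at least three patterns, whence $r=2$; this already gives the ``precisely two orbits'' assertion. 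With $r=2$ the two admissible patterns differ by $1$ in each coordinate, and resolving the boundary condition $\min(m_1,k)-\max(0,k-m_2)\le 1$ leaves only two possibilities: either some orbit has $m_i=1$ and is avoided by the codewords, giving $|U|=1$ and $\alpha\subseteq V$ (conclusion~(1)); or $n-2k=1$, i.e.\ $n=2k+1$, in which case combining the two pattern equations with the constraints $2a_i\le|\Omega_i|$ pins down $|\alpha\cap U|$ and yields conclusion~(2).

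The main obstacle is therefore the $\delta=1$ case: the combinatorial bookkeeping that (a)~rules out $r\ge 3$ via the count of realised patterns, and (b)~extracts the exact parameters in the two surviving configurations, in particular isolating precisely when the odd-graph condition $n=2k+1$ and the half-size condition $|\alpha\cap U|=|U|/2$ (with $|U|$ even) are forced. The auxiliary ``closure'' fact — that every $k$-subset of $\Omega\setminus\alpha$ of codeword-pattern is itself a codeword, immediate from transitivity on $C_1$ — is the tool I would use to keep this case analysis finite and to confirm that the surviving configurations are genuinely realised, matching the constructions of Lemma~\ref{lem-intransExamples}.
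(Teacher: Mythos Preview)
Your overall plan tracks the paper's proof closely: both arguments use the $\Aut(C)$-invariant orbit-pattern $\iota(\alpha)$, exploit that the neighbours of a codeword realise at most two patterns, and squeeze out the parameters from this constraint. Your route to ``at most two orbits'' is organised differently --- you argue directly via a transportation-count that $r\geq 3$ forces at least three neighbour-patterns, whereas the paper first proves a two-block lemma (Lemma~\ref{lem-intrans}) for an arbitrary invariant bipartition and then applies it three times (Lemma~\ref{intransTwoOrbits}) --- but both are sound, and your $\delta\geq 2$ treatment via $W=\bigcap_{\beta\in C}\beta$ and $\Sym(W)\leq\Aut(C)$ is essentially the paper's Lemma~\ref{lem-U0U1orbits}.

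There is, however, a genuine gap in your $\delta=1$, $r=2$ endgame, and it is not merely cosmetic. You assert that the inequality $\min(m_1,k)-\max(0,k-m_2)\leq 1$ leaves only ``$m_i=1$ and that orbit is avoided'' or ``$n=2k+1$''. But $m_i=1$ together with $2a_i\leq|\Omega_i|$ only gives $a_i\in\{0,1\}$; the subcase $a_i=1$, i.e.\ $|\Omega_i|=2$ with each codeword meeting $\Omega_i$ in exactly one point, is \emph{not} excluded and does \emph{not} force $n=2k+1$. Concretely, take $n=6$, $k=2$, $U=\{1,2\}$, $V=\{3,4,5,6\}$, and let $C$ consist of all pairs meeting $U$ in exactly one point. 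Then $\Aut(C)=\Sym(U)\times\Sym(V)$ has orbits $U,V$ on $\Omega$, the code is neighbour-transitive with $\delta=1$ (e.g.\ $\{1,3\}$ and $\{2,4\}$ are disjoint codewords, and $C_1$ is exactly the set of pairs inside $V$), yet $|U|=2\neq 1$, $n=6\neq 2k+1$, and $\delta<2$ --- so none of (1)--(3) applies. The paper's own argument has the same lacuna: in the proof of Lemma~\ref{lem-intrans}, with $(a,b)=(1,k-1)$ the step ``since $(a-2,b+2)\notin S$ we have $2b+2>|V|$'' is vacuous because $a-2<0$, and the desired bound $|V|\leq 2b+1$ does not follow. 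Thus the statement as written appears to require amendment (to accommodate this $|U|=2$ family), and your sketch, like the paper, does not close the case.
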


If $C$ is a neighbour-transitive code with minimum distance at least $3$ in a Kneser graph and $\Aut(C)$ acts transitively on $\Omega$ then we prove the following. Note that $2$-homogeneous implies primitive.

\begin{theorem}\label{notodd2hom}
 Let $C$ be a neighbour-transitive code with minimum distance $\delta\geq 3$ in $K(n,k)$ and suppose that $\Aut(C)$ acts transitively on $\Omega$. Then either $K(n,k)$ is an odd graph or $\Aut(C)$ acts $2$-homogeneously on $\Omega$.
\end{theorem}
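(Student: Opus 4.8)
The plan is to prove the contrapositive-flavoured statement directly: assuming $n\geq 2k+2$, show that $G:=\Aut(C)$ acts $2$-homogeneously on $\Omega$. Throughout I use that $\Aut(K(n,k))\cong\Sym(\Omega)$, so $G\leq\Sym(\Omega)$ acts naturally on $\Omega$, and that neighbour-transitivity gives $G$ transitive on both $C=C_0$ and $C_1$. First I would unpack $\delta\geq 3$ combinatorially. Two distinct codewords $\alpha,\beta$ are at distance $1$ exactly when they are disjoint, and at distance $2$ exactly when they meet but admit a common neighbour, i.e. a $k$-set disjoint from both; and a $k$-set is disjoint from both iff $|\alpha\cap\beta|\geq 3k-n$. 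Hence $\delta\geq 3$ is equivalent to: every two codewords meet (condition (a)), and no $k$-set is disjoint from two distinct codewords (condition (b)), which together force $1\leq|\alpha\cap\beta|\leq 3k-n-1$ and so $n\leq 3k-2$. Note that (b) says precisely that each element of $C_1$ is disjoint from a \emph{unique} codeword, and that transitivity on $\Omega$ makes the number $r=|C|k/n$ of codewords through a point constant.

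The heart of the argument is a $G$-equivariant fibration of $C_1$ over $C$. Writing $\overline{\alpha}=\Omega\setminus\alpha$, every $k$-subset of $\overline{\alpha}$ is disjoint from $\alpha$, lies outside $C$ by (a), and hence belongs to $C_1$; thus the neighbours of $\alpha$ in $\varGamma$ are exactly the $k$-subsets of $\overline{\alpha}$. By (b) the rule sending a neighbour $\gamma\in C_1$ to the unique codeword disjoint from it is a well-defined $G$-equivariant surjection $C_1\to C$ with fibre $\binom{\overline{\alpha}}{k}$ over $\alpha$. Since $G$ is transitive on $C_1$, any $g\in G$ carrying one neighbour $\gamma$ of $\alpha$ to another neighbour $\gamma'$ of $\alpha$ must send the unique codeword disjoint from $\gamma$, namely $\alpha$, to the unique codeword disjoint from $\gamma'$, again $\alpha$; so $g\in G_\alpha$. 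Therefore $G_\alpha$ is transitive on $\binom{\overline{\alpha}}{k}$, i.e. $k$-homogeneous on the $(n-k)$-set $\overline{\alpha}$, equivalently $(n-2k)$-homogeneous by complementation inside $\overline{\alpha}$. Here $n\geq 2k+2$ enters decisively: then $2\leq n-2k\leq (n-k)/2$ (the upper bound is $n\leq 3k$, already known), so the Livingstone--Wagner theorem lets me descend from $(n-2k)$-homogeneity to $2$-homogeneity of $G_\alpha$ on $\overline{\alpha}$. This is exactly the step that collapses in the odd graph, where $n-2k=1$ yields only transitivity of $G_\alpha$ on $\overline{\alpha}$.

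It then remains to globalise $2$-homogeneity from the complements to all of $\Omega$. First I would verify that every pair $\{x,y\}$ lies in some $\overline{\alpha}$: if instead every codeword met $\{x,y\}$ then $|C|\leq 2r=2|C|k/n$, forcing $n\leq 2k$, contrary to $n\geq 2k+1$. Now fix $\alpha_0\in C$. Given pairs $\{x,y\}\subseteq\overline{\beta}$ and $\{x',y'\}\subseteq\overline{\beta'}$, transitivity of $G$ on $C$ supplies $g,h\in G$ with $g\beta=h\beta'=\alpha_0$, carrying both pairs into $\overline{\alpha_0}$; the $2$-homogeneity of $G_{\alpha_0}$ on $\overline{\alpha_0}$ gives $f\in G_{\alpha_0}$ matching them, and $h^{-1}fg\in G$ maps $\{x,y\}$ to $\{x',y'\}$. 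Hence $G$ is $2$-homogeneous on $\Omega$.

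I expect the main obstacle to be the fibration step: recognising that $\delta\geq 3$ forces each neighbour to be disjoint from a unique codeword, and parlaying transitivity on the single orbit $C_1$ into transitivity of the stabiliser $G_\alpha$ on the neighbours of $\alpha$. Once this local $k$-homogeneity is secured, the Livingstone--Wagner descent and the gluing are routine, and it is precisely the numerical threshold $n-2k\geq 2$ arising in that descent which singles out the odd graph as the unique exception.
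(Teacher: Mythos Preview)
Your proof is correct and follows essentially the same route as the paper: the fibration argument giving $(n-2k)$-homogeneity of $G_\alpha$ on $\overline{\alpha}$ is the paper's Lemma~\ref{homogeneouslemma}, the claim that every pair lies in some $\overline{\beta}$ is Lemma~\ref{allpairsincomplement}, and the gluing is the content of the proof of Theorem~\ref{notodd2hom}. The only minor variations are that your counting bound $|C|\leq 2r$ replaces the paper's clique--coclique argument on an auxiliary graph, that you make the Livingstone--Wagner descent explicit where the paper invokes $2$-homogeneity of $G_{\alpha}$ on $\overline{\alpha}$ directly, and that you glue by pulling both pairs into a fixed $\overline{\alpha_0}$ whereas the paper links them through a pair in the (at least two-element, since $n\geq 2k+2$) intersection $\overline{\alpha_1}\cap\overline{\alpha_2}$.
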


Since $2$-homogeneous groups have been classified (see \cite[Theorem 9.4B and Section 7.7]{dixon1996permutation}), we pose the following problem.

\begin{problem}
 Classify neighbour-transitive codes $C$ in Kneser graphs with minimum distance $\delta\geq 3$ and where $\Aut(C)$ acts $2$-homogeneously on $\Omega$.
\end{problem}

Given the above result, we turn our attention to codes in odd graphs with a transitive but imprimitive action on $\Omega$. Note that in Theorem~\ref{imprimitiveOddThm}, the fact that $ab=2k+1$ implies that $a$ and $b$ are both odd. Also, we use the notation $\{b_1^{a_1},\ldots,b_s^{a_s}\}$ for a multiset containing elements $b_i$ with multiplicity $a_i$, for $i=1,\ldots,s$.

\begin{theorem}\label{imprimitiveOddThm}
 Let $C$ be a non-trivial neighbour-transitive code in $K(2k+1,k)=O_{k+1}$ with minimum distance $\delta$ and suppose that $\Aut(C)$ acts transitively but imprimitively on $\Omega$ with system of imprimitivity $\{B_1,\ldots,B_a\}$, where $|B_i|=b$ for each $i$ and $ab=2k+1$ with $a,b\geq 2$. Moreover, for each $\alpha\subseteq \Omega$ let $\iota(\alpha)$ be the multiset $\{|\alpha\cap B_i|\mid i=1,\ldots,a\}$. Then one of the following holds.
 \begin{enumerate}[{\rm (1)}]
  \item $\delta\geq 2$ and there exist integers $a_0,a_1$ and $b_1$ such that $\iota(\alpha)=\{b^{a_0},b_1^{a_1}\}$ for all $\alpha\in C$.
  \item $\delta=1$ and $\iota(\alpha)=\left\{\left(\frac{b-1}{2}\right)^{(a+1)/2},\left(\frac{b+1}{2}\right)^{(a-1)/2}\right\}$ for all $\alpha\in C$.
  \item $\delta=1$ and $\iota(\alpha)=\left\{0^{(a-1)/2},\frac{b-1}{2},b^{(a-1)/2}\right\}$ for all $\alpha\in C$.
 \end{enumerate}
 In each case an infinite family of examples exists (see Lemma~\ref{lem-imprimitiveExamples}).
\end{theorem}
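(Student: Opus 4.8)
The plan is to translate neighbour-transitivity into rigid combinatorial constraints on the intersection pattern $\iota(\alpha)$ and then split on $\delta$. Write $G=\Aut(C)$. Since $\{B_1,\dots,B_a\}$ is a system of imprimitivity, $G$ permutes the blocks, so $\iota(\alpha^g)=\iota(\alpha)$ for every $g\in G$; as $G$ is transitive on $C$ and on $C_1$, the pattern $\iota$ is constant on $C$ (write its multiplicities as $n_t=\#\{i:|\alpha\cap B_i|=t\}$) and constant on $C_1$, say equal to $\iota'$. Because $n=2k+1$, the complement $\bar\alpha=\Omega\setminus\alpha$ of a codeword has size $k+1$, and the neighbours of $\alpha$ in $O_{k+1}$ are exactly the $k+1$ sets $\bar\alpha\setminus\{y\}$ with $y\in\bar\alpha$. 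Setting $d_i=b-|\alpha\cap B_i|=|\bar\alpha\cap B_i|$, the neighbour $\bar\alpha\setminus\{y\}$ with $y\in B_j$ has pattern obtained from the complementary multiset $\{d_1,\dots,d_a\}$ by lowering $d_j$ by $1$; this is possible precisely when $d_j\geq 1$, i.e. when $B_j\not\subseteq\alpha$.

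The first key point is that \emph{distinct removable values give distinct neighbour-patterns}: if two blocks have values $d_j=p$ and $d_{j'}=p'$ with $p\neq p'$ and both $\geq 1$, then comparing the multiplicity of $p$ in the two lowered multisets already produces a contradiction, so they differ. Since every neighbour of $\alpha$ lies in $C$ or $C_1$ and so has pattern $\iota$ or $\iota'$, there are \emph{at most two} distinct values among the $d_i\geq 1$, equivalently at most two distinct values among the $|\alpha\cap B_i|$ that are less than $b$. When $\delta\geq 2$ no neighbour of $\alpha$ is a codeword, so all neighbours share the single pattern $\iota'$ and there is exactly one removable value $b_1<b$, while the remaining blocks satisfy $|\alpha\cap B_i|=b$. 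This yields $\iota=\{b^{a_0},b_1^{a_1}\}$, which is conclusion (1); a short count using $\sum_i|\alpha\cap B_i|=k=(ab-1)/2$ shows $a_0\geq 1$.

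The substantial case, and the one I expect to be the main obstacle, is $\delta=1$. Here I would fix disjoint codewords $\alpha,\beta$; then $\beta=\bar\alpha\setminus\{x\}$ for the unique $x\notin\alpha\cup\beta$, lying in some block $B_{i_0}$, and with $u=|\alpha\cap B_{i_0}|$, $v=|\beta\cap B_{i_0}|$ one gets $u+v=b-1$. The fact that this particular neighbour $\beta$ is itself a codeword translates, through the neighbour formula, into the identity $\bar\iota=\iota$ with a single entry $v$ replaced by $v+1$, where $\bar\iota=\{b-t:t\in\iota\}$; in multiplicities this reads $n_{b-v}=n_v-1$, $n_{b-v-1}=n_{v+1}+1$, and $n_{b-t}=n_t$ for all $t\notin\{v,v+1\}$. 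The crucial step is to force $u=v=(b-1)/2$: if $v\neq(b-1)/2$ then $u=b-1-v\notin\{v,v+1\}$ has complement $b-u=v+1$, so the symmetry relation gives $n_{v+1}=n_u$ while the second identity gives $n_u=n_{v+1}+1$, a contradiction. Hence the deficient block is balanced and $\iota$ is complement-symmetric except that $n_{(b-1)/2}=n_{(b+1)/2}+1$.

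Finally I would combine this symmetry with the ``at most two removable values'' bound. If $n_{(b+1)/2}\geq 1$, then $(b-1)/2$ and $(b+1)/2$ are already two distinct removable values, so all other multiplicities vanish (in particular $n_0=n_b=0$), and solving $n_{(b-1)/2}+n_{(b+1)/2}=a$ together with $n_{(b-1)/2}=n_{(b+1)/2}+1$ gives conclusion (2). If instead $n_{(b+1)/2}=0$, then $n_{(b-1)/2}=1$, and any further nonzero multiplicity would come from a complement-symmetric pair $\{t,b-t\}$ with $1\leq t<b-t<b$, contributing two extra removable values and violating the bound; the only admissible pair is $\{0,b\}$, forcing $\iota=\{0^{m},((b-1)/2)^{1},b^{m}\}$ with $m=(a-1)/2$, which is conclusion (3). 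The existence of an infinite family of examples in each case is established separately in Lemma~\ref{lem-imprimitiveExamples}.
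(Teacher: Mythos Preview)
Your argument is correct and follows the same overall strategy as the paper: exploit that $\iota$ is constant on $C$ and on $C_1$, note that neighbours of $\alpha$ are obtained by lowering one entry of $\iota(\overline\alpha)$, and bound the number of distinct ``removable'' values by the number of admissible neighbour-types. The $\delta\geq 2$ case is handled identically in both (the paper phrases it as $s\in\{1,2\}$ with $s=1$ ruled out by the same divisibility $a\mid\gcd(k,k+1)$ that you use to get $a_0\geq 1$).

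The $\delta=1$ case is where the two presentations diverge in organisation, though not in substance. The paper writes $\iota(\alpha)=\{b_1^{a_1},\dots,b_s^{a_s}\}$, first eliminates the possibility that \emph{every} neighbour lies in $C$, then argues $s\in\{2,3\}$ and performs an explicit case split (the $s=3$ case forcing one $b_j=b$ and then subcases $b_1=0$ versus $b_2=0$) by directly matching multisets. You instead encode the condition $\iota(\beta)=\iota(\alpha)$ as the three multiplicity identities $n_{b-v}=n_v-1$, $n_{b-v-1}=n_{v+1}+1$, $n_{b-t}=n_t$ otherwise, force $v=(b-1)/2$ by a short parity/contradiction argument, and then split cleanly on whether $n_{(b+1)/2}>0$. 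Your route avoids the somewhat ad~hoc subcase enumeration and makes the complement-symmetry of $\iota$ explicit, at the cost of introducing the $n_t$ notation; the paper's route is more concrete but longer. Either way the same two patterns emerge, and the existence of examples is deferred to Lemma~\ref{lem-imprimitiveExamples} in both.
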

%
%

Theorems~\ref{intransitiveParamThm} and~\ref{imprimitiveOddThm} characterise neighbour-transitive codes in odd graphs where the action on $\Omega$ is intransitive or imprimitive. The following problem concerns the case where the action on $\Omega$ is primitive.

\begin{problem}
 Given a primitive group of O'Nan--Scott type $X$, do there exist neighbour-transitive codes $C$ in odd graphs where $\Aut(C)^\Omega$ has type $X$ and $C$ has minimum distance at least $3$?
\end{problem}

Note that Example~\ref{halfLinesinPlaneEx} is an example of a neighbour-transitive code $C$ where $\Aut(C)$ is an almost-simple group, but that in this case $C$ has minimum distance $1$.

In some sense, the codes in Lemmas~\ref{lem-intransExamples} and \ref{lem-imprimitiveExamples} arise in a straightforward manner, being the sets of all those vertices satisfying a given part of Theorem~\ref{intransitiveParamThm} or Theorem~\ref{imprimitiveOddThm}. Example~\ref{ag32tetrahedronEx} shows that these are not the only codes satisfying Theorem~\ref{intransitiveParamThm}. The following asks if this is the case for Theorem~\ref{imprimitiveOddThm}.

\begin{problem}
 Do there exist codes satisfying Theorem~\ref{imprimitiveOddThm} other than those given by Lemma~\ref{lem-imprimitiveExamples}?
\end{problem}

%

This paper is organised as follows. Section~\ref{prelims} introduces notation and preliminary results used throughout. Section~\ref{2NTsection} concerns $2$-neighbour-transitive codes, proving Theorem~\ref{Kneser2NTdelta5}. We then turn our attention to neighbour-transitive codes. Section~\ref{intransitiveSection} addresses the `intransitive' case, providing the examples for, and proving, Theorem~\ref{intransitiveParamThm}. Section~\ref{imprimitiveSection} includes various examples for the `imprimitive' case and proves Theorem~\ref{imprimitiveOddThm}. Finally, in Section~\ref{transitiveSection}, we prove Theorem~\ref{notodd2hom}.

\section{Preliminaries}\label{prelims}


Throughout this paper we consider the Kneser graph $\varGamma=K(n,k)$ with underlying set $\Omega$ of size $n$, so that $V\varGamma={\Omega \choose k}$ and vertices $\alpha,\beta\in V\varGamma$ are adjacent if $\alpha$ and $\beta$ are disjoint. Recall that we assume $2\leq k\leq \frac{n-1}{2}$. For a vertex $\alpha\in\varGamma$, the set of $i$-neighbours of $\alpha$ is denoted by $\varGamma_i(\alpha)=\{ \beta\in V\varGamma \mid d(\alpha,\beta)=i \}$. For a subset $U\subseteq\Omega$, we denote by $\overline{U}$ the complement $\Omega\setminus U$. The automorphism group $\Aut(\varGamma)$ of the Kneser graph is the symmetric group $\Sym(\Omega)\cong\s_n$ \cite[Corollary~7.8.2]{godsil2013algebraic}. If a group $G$ acts transitively on a set $U$ then we say that $U$ is a $G$-orbit. For more information about permutation groups see \cite{dixon1996permutation}.


\begin{definition}\label{GinvDef}
 Let $G$ be a group acting on a set $\Omega$. A \emph{$G$-invariant map} on $\Omega$ is a map $\iota:\Omega\ra S$, for some set $S$, such that for all $\alpha\in\Omega$ and $g\in G$ it holds that $\iota(\alpha)=\iota(\alpha^g)$. If $\iota$ is a $G$-invariant map on $\Omega$ and $\alpha\in\Omega$ then we say that $\alpha$ has \emph{type} $\iota(\alpha)$.
\end{definition}

A $G$-invariant map can be a simple way to rule out the existence of a neighbour-transitive code with automorphism group contained in a prescribed group $G$. In particular, producing counterexamples to the following lemma will be one important tool in proving our main results. 


\begin{lemma}\label{invlemma}
 Let $C$ be a neighbour-transitive code with minimum distance $\delta$ in the graph $\varGamma$, let $G\leq \Aut(\varGamma)$ such that $\Aut(C)\leq G$, and let $\iota$ be a $G$-invariant map on the vertices of $\varGamma$. Then the following hold:
 \begin{enumerate}[{\rm (1)}]
  \item If $i\in\{0,1\}$ then all vertices in $C_i$ have the same type.
  \item If $\alpha\in C$ and $\beta\in C_1$ then every vertex in $\varGamma_1(\alpha)$ has the same type as $\alpha$ or $\beta$.
  \item If $\delta\geq 2$ and $\alpha\in C$ then every pair of vertices in $\varGamma_1(\alpha)$ have the same type. 
 \end{enumerate}
\end{lemma}

\begin{proof}
 Since $C(=C_0)$ and $C_1$ are $\Aut(C)$-orbits, and $\Aut(C)\leq G$ preserves the type of a vertex, part (1) holds. Suppose $\alpha\in C$, $\beta\in C_1$ and $\gamma\in\varGamma_1(\alpha)$. Then $\gamma$ is either in $C$ or in $C_1$ and so, by part (1), has the same type as $\alpha$ or $\beta$, proving part (2). If $\delta\geq 2$ then $\gamma\notin C$ and part (3) holds.
\end{proof}

%

\section{\texorpdfstring{$2$}{2}-Neighbour-transitive codes}\label{2NTsection}

First we prove that there are no $2$-neighbour-transitive codes with minimum distance at least $5$ in a Kneser graph, unless it is in fact an odd graph.

\begin{theorem}\label{no2ntdelta5kneser}
 There are no $2$-neighbour-transitive codes with minimum distance $\delta\geq 5$ in the Kneser graph $\varGamma=K(n,k)$ when $k\neq (n-1)/2$, that is, when $\varGamma$ is not an odd graph.
\end{theorem}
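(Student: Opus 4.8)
The plan is to use $2$-neighbour-transitivity to force the stabiliser of a codeword to act transitively on the distance-$2$ neighbourhood of that codeword, and then to contradict this transitivity via the intersection-size invariant unless $n=2k+1$. Since $k\neq(n-1)/2$ together with the standing assumption $k\leq(n-1)/2$ gives $n\geq 2k+2$, I first dispose of the degenerate case $k=2$: here any two distinct $2$-subsets are either disjoint (distance $1$) or meet in a point, and since $n\geq 6$ there is always a $2$-subset disjoint from both, so such a pair is at distance $2$. Thus $\varGamma$ has diameter $2$ and no code with $|C|\geq 2$ can have $\delta\geq 5$, so the statement is vacuous. I may therefore assume $k\geq 3$ and $n\geq 2k+2$.

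Fix a codeword $\alpha\in C$. The key structural step is to show that $\Aut(C)_\alpha$ acts transitively on $\varGamma_2(\alpha)$. Because $\delta\geq 5$, every vertex $\gamma$ with $d(\alpha,\gamma)=2$ satisfies $d(\beta,\gamma)\geq\delta-2\geq 3$ for all other codewords $\beta$, so $\varGamma_2(\alpha)\subseteq C_2$; moreover no such $\gamma$ can be at distance $2$ from two distinct codewords $\alpha,\alpha'$, since this would force $d(\alpha,\alpha')\leq 4<\delta$. Now take $\gamma,\gamma'\in\varGamma_2(\alpha)$. By $2$-neighbour-transitivity $\Aut(C)$ is transitive on $C_2$, so there is $g\in\Aut(C)$ with $\gamma^g=\gamma'$; then $\alpha^g$ is a codeword at distance $2$ from $\gamma'$, and uniqueness of the nearest codeword forces $\alpha^g=\alpha$, whence $g\in\Aut(C)_\alpha$. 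This establishes the claimed local transitivity.

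Next I apply the invariant-map philosophy of Lemma~\ref{invlemma}, but to the stabiliser $\Aut(C)_\alpha$: the map $\gamma\mapsto|\alpha\cap\gamma|$ is constant on each $\Aut(C)_\alpha$-orbit, so the transitivity just proved forces $|\alpha\cap\gamma|$ to take a single value as $\gamma$ ranges over $\varGamma_2(\alpha)$. It remains to see that this fails. A $k$-subset $\gamma$ with $|\alpha\cap\gamma|=s$ lies at distance exactly $2$ from $\alpha$ precisely when $1\leq s\leq k-1$ (so that $\gamma$ is neither equal nor adjacent to $\alpha$) and $n-(2k-s)\geq k$, i.e. $s\geq 3k-n$ (so that $\alpha$ and $\gamma$ admit a common neighbour inside $\Omega\setminus(\alpha\cup\gamma)$). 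For $k\geq 3$ and $n\geq 2k+2$ both $s=k-1$ and $s=k-2$ satisfy these inequalities, and $k$-subsets realising each value plainly exist; hence $\varGamma_2(\alpha)$ contains vertices with two distinct values of $|\alpha\cap\gamma|$, contradicting the transitivity of $\Aut(C)_\alpha$ on $\varGamma_2(\alpha)$. Therefore $n\geq 2k+2$ is impossible and $n=2k+1$, as required.

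The substantive part of the argument is the passage from global transitivity on $C_2$ to local transitivity of $\Aut(C)_\alpha$ on $\varGamma_2(\alpha)$, where the hypothesis $\delta\geq 5$ is used sharply and twice: once to guarantee $\varGamma_2(\alpha)\subseteq C_2$, and once to guarantee that each distance-$2$ vertex has a unique nearest codeword. I expect this to be the only delicate point; the final count is an elementary verification. It also pinpoints why odd graphs are exceptional: when $n=2k+1$ the constraint $s\geq 3k-n=k-1$ combines with $s\leq k-1$ to force $s=k-1$, so $\varGamma_2(\alpha)$ has a single intersection type and no contradiction arises.
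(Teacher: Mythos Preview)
Your proof is correct and follows essentially the same approach as the paper: derive transitivity of $\Aut(C)_\alpha$ on $\varGamma_2(\alpha)$ from $\delta\geq 5$, then contradict it via the intersection-size invariant by exhibiting vertices in $\varGamma_2(\alpha)$ with $|\alpha\cap\gamma|=k-1$ and $k-2$. Your treatment is in fact slightly more careful than the paper's, which constructs explicit $\beta_1,\beta_2$ without separating off the degenerate case $k=2$ (where $|\alpha\cap\beta_2|=0$ and the construction fails, though the theorem is vacuous there); you also spell out the uniqueness-of-nearest-codeword step that the paper compresses into a single clause.
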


\begin{proof}
 Suppose $C$ is a $2$-neighbour-transitive code with minimum distance $\delta\geq 5$ in $\varGamma$. Then any pair of balls of radius $2$ centred at distinct codewords are disjoint. In particular, since $\Aut(C)$ acts transitively on $C_1$, if $\alpha\in C$ then $\Aut(C)_\alpha$ must act transitively on $\varGamma_2(\alpha)$. Since $\varGamma$ is vertex-transitive, we may assume that $\alpha=\{1,2,\ldots,k\}$. Consider the vertices $\beta_1=\{2,3,\ldots,k+1\}$, $\beta_2=\{3,4,\ldots,k+2\}$ and $\gamma=\{k+3,k+4,\ldots,2k+2\}$. As $|\alpha\cap\beta_1|=k-1$ and $|\alpha\cap\beta_2|=k-2$ and $\gamma$ is disjoint from each of $\alpha$, $\beta_1$ and $\beta_2$, we have that $\beta_1,\beta_2\in\varGamma_2(\alpha)$. However, $\Aut(C)_\alpha\leq \Sym(\alpha)\times\Sym(\overline{\alpha})$ and, since $|\alpha\cap\beta_1|\neq|\alpha\cap\beta_2|$, there is no element of $\Aut(C)_\alpha$ mapping $\beta_1$ to $\beta_2$, which gives a contradiction, and the result holds.
\end{proof}

Let $C$ be a code in the odd graph $O_{k+1}=K(2k+1,k)$. Then, since $C$ consists of a set of $k$-subsets of the set $\Omega$, which has size $2k+1$, we may consider $C$ to also be a code in the Johnson graph $J(2k+1,k)$ by simply taking $\Omega$ to be the underlying subset upon which $J(2k+1,k)$ is defined. Note that $k$-subsets $\alpha,\beta\subseteq \Omega$ are adjacent in $J(2k+1,k)$ precisely when they are at distance $2$ in $O_k$.

\begin{lemma}\label{oddtojohnson}
 Let $C$ be a $2$-neighbour-transitive code in $O_{k+1}$ with minimum distance $\delta\geq 5$. Then $C$ is also a code in the Johnson graph $J(2k+1,k)$, with the same vertex set as $O_{k+1}$, and $C$ is neighbour-transitive in $J(2k+1,k)$ with minimum distance $\delta'\geq 3$. 
\end{lemma}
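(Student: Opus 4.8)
The plan is to establish a precise dictionary between distances in the odd graph $O_{k+1}$ and distances in the Johnson graph $J(2k+1,k)$, and then push the neighbour-transitivity of $C$ through this correspondence. For two $k$-subsets $\alpha,\beta\subseteq\Omega$ with $|\Omega|=2k+1$, the key quantity is $t=|\alpha\cap\beta|$. In the Johnson graph, $d_J(\alpha,\beta)=k-t$, so adjacency in $J(2k+1,k)$ means $t=k-1$. In the odd graph, adjacency means $\alpha\cap\beta=\emptyset$, i.e. $t=0$; since $|\alpha\cup\beta|\leq 2k+1$ forces $t\geq 2k-(2k+1)\cdot[\text{disjointness fails}]$, one checks that $d_{O}(\alpha,\beta)=2$ precisely when $t=k-1$ (one must move a single element through a disjoint intermediate step). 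So I would first record the explicit relation between $t$ and each of $d_O$ and $d_J$, verifying in particular the claim stated just before the lemma that $O_{k+1}$-distance $2$ corresponds exactly to $J(2k+1,k)$-adjacency.

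**Translating minimum distance and the distance partition.**

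Next I would show that the minimum distance condition transfers correctly. Suppose $\alpha,\beta\in C$ are distinct codewords with $d_O(\alpha,\beta)\geq 5$. Using the dictionary from the first step to convert the $O_{k+1}$-distance into the intersection number $t$, and then converting that $t$ into $d_J(\alpha,\beta)=k-t$, I would verify that $d_O\geq 5$ forces $d_J\geq 3$; this gives $\delta'\geq 3$. The mechanism is that large odd-graph distance means $t$ is bounded away from $k-1$, hence $k-t$ is bounded below. I expect a short case analysis on the parity of the $O_{k+1}$-distance here, since even and odd steps in the odd graph affect the intersection number differently.

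**Transferring neighbour-transitivity.**

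For the symmetry condition, the crucial observation is that $C$ has the same vertex set in both graphs and that $\Aut(C)\leq\Sym(\Omega)\cong\s_{2k+1}$ acts on $k$-subsets in a way that preserves intersection numbers, hence preserves both graph metrics simultaneously; in particular $\Aut(C)$ (computed in $O_{k+1}$) is contained in the automorphism group of $C$ as a code in $J(2k+1,k)$. It therefore suffices to check that the neighbour set $C_1$ of $C$ in $J(2k+1,k)$ is a single $\Aut(C)$-orbit. Because $\delta'\geq 3$, the Johnson-graph $1$-neighbours of $C$ are exactly those vertices at $J$-distance $1$ from some codeword, and via the dictionary these are precisely the vertices at $O_{k+1}$-distance $2$ from $C$. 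The main obstacle, and the step deserving the most care, is to argue that this set of Johnson-neighbours coincides with (or is controlled by) a set on which $\Aut(C)$ is already known to act transitively from the $2$-neighbour-transitivity of $C$ in $O_{k+1}$: one must confirm that the $O_{k+1}$-neighbour $C_2$ and the relevant $J$-neighbours match up, so that transitivity on $C$ and on the second neighbour class in the odd graph yields transitivity on $C_1^{J}$. I would resolve this by showing every Johnson-$1$-neighbour of a codeword lies in the part $C_2$ of the odd-graph distance partition (using $\delta\geq 5$ to exclude its lying in $C$ or $C_1^{O}$), and then invoking the given $2$-neighbour-transitivity to conclude transitivity on $C_1^{J}$.
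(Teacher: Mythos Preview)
Your proposal is correct and follows essentially the same route as the paper's proof: establish the correspondence $d_O(\alpha,\beta)=2 \Leftrightarrow |\alpha\cap\beta|=k-1 \Leftrightarrow d_J(\alpha,\beta)=1$, use it to deduce $\delta'\geq 3$ from $\delta\geq 5$, observe that $\Aut(C)$ is the same in both graphs, and then identify $C_1^J$ with $C_2^O$. The paper carries out exactly the step you flag as the main obstacle---ruling out that a Johnson-$1$-neighbour $\beta'$ of $\alpha'\in C$ lies in $C_1^O$---by assuming $\beta'$ is disjoint from some $\gamma\in C$ and computing that then $|\alpha'\cap\gamma|\in\{0,1\}$, forcing $d_O(\alpha',\gamma)\leq 3<\delta$.
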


\begin{proof}
 First note that the vertex set of $O_{k+1}$ and the vertex set of $J(2k+1,k)$ are both ${\Omega \choose k}$, so that $C$ is indeed a code in $J(2k+1,k)$. For clarity, let $C'$ be the code in $J(2k+1,k)$. Since $k\neq (2k+1)/2$, we have that $\Aut(J(2k+1,k))=\Sym(\Omega)$, and hence $\Aut(J(2k+1,k))=\Aut(O_{k+1})$. In particular, $\Aut(C)=\Aut(C')$ and so $\Aut(C')$ acts transitively on $C'$. Since $C$ is $2$-neighbour-transitive, $\Aut(C)$ acts transitively on $C_2$. In the remainder of the proof, we will show that $\delta'\geq 3$ and that $C_2=C'_1$, which implies that $\Aut(C')$ acts transitively on $C'_1$, from which the result will follow. 
 
 First, if $C'$ has  minimum distance $\delta'=1$ then there would exist $\alpha_1,\alpha_2\in C'$ such that $|\alpha_1\cap\alpha_2|=k-1$, in which case $d_{O_{k+1}}(\alpha_1,\alpha_2)=2$ and $C$ would have minimum distance $\delta\leq 2$. Similarly, if $C'$ has minimum distance $\delta'=2$ then there would exist $\alpha_1,\alpha_2\in C'$ such that $|\alpha_1\cap\alpha_2|=k-2$, in which case $d_{O_{k+1}}(\alpha_1,\alpha_2)=4$ and $C$ would have minimum distance $\delta\leq 4$. Since $\delta\geq 5$ we have that $C'$ has minimum distance $\delta'\geq 3$.
 
 Let $\beta\in C_2$. Then there exists some $\alpha\in C$ such that $d_{O_{k+1}}(\alpha,\beta)=2$, that is, $|\alpha\cap\beta|=k-1$ so that $\alpha$ and $\beta$ are adjacent in $J(2k+1,k)$. Since $\delta'\geq 3$, it follows that $\beta\in C'_1$. This implies that $C_2\subseteq C'_1$. Let $\beta'\in C'_1$. Then there exists an $\alpha'$ in $C'$, and hence in $C$, such that $|\alpha'\cap\beta'|=k-1$. Hence $d_{O_{k+1}}(\alpha',\beta')=2$. This implies, since $\beta'\notin C$ that $\beta'$ is in $C_1$ or $C_2$. Suppose that $\beta'\in C_1$ so that there exists some $\gamma\in C$ such that $d_{O_{k+1}}(\gamma,\beta')=1$. Then $\gamma$ and $\beta'$ are disjoint. This implies that either $\gamma$ and $\alpha'$ are disjoint, in which case $d_{O_{k+1}}(\gamma,\alpha')=1$, or $|\alpha'\cap\gamma|=1$, in which case $d_{O_{k+1}}(\gamma,\alpha')=3$. This gives a contradiction and hence $\beta'\in C_2$ so that $C'_1=C_2$, proving the result.
\end{proof}

We are now ready to prove Theorem~\ref{Kneser2NTdelta5}.

\begin{proof}[Proof of Theorem~\ref{Kneser2NTdelta5}]
 Suppose that $C$ is a $2$-neighbour-transitive code in $O_{k+1}$ with minimum distance $\delta\geq 5$. Then, by Lemma~\ref{oddtojohnson}, $C$ is a neighbour-transitive code in $J(2k+1,k)$ with minimum distance $\delta'\geq 3$. Thus, we may apply the results of \cite{liebler2014neighbour} and \cite{neunhoffer2014sporadic} in order to prove the result. Note that since $|\Omega|$ is odd, we can immediately rule out any case of the relevant results where $|\Omega|$, denoted $v$ in \cite{liebler2014neighbour} and \cite{neunhoffer2014sporadic}, is even. 
 
 First suppose that $\Aut(C)$ acts intransitively or imprimitively on $\Omega$. Then, by \cite[Theorem~1.1]{liebler2014neighbour}, $C$ is as in \cite[Examples 3.1, 4.1 or 4.4]{liebler2014neighbour}. All of the codes in \cite[Example~3.1]{liebler2014neighbour} either have minimum distance $1$ in the Johnson graph or consist of only a single codeword, and thus no examples occur here. In \cite[Example~4.1]{liebler2014neighbour}, all codes have minimum distance $1$ in the Johnson graph, except the code on line 1 of \cite[Table~3]{liebler2014neighbour}, which has minimum distance $1$ in $O_{k+1}$. Thus none of these examples arise. The codes in \cite[Example~4.4]{liebler2014neighbour} require that there exist positive integers $a,b$ and $k_0$ such that $2k+1=ab$ and $k=ak_0$, and also that there exists a code $C'$ in $J(b,k_0)$ that is used to construct $C$. However, since $2k+1$ and $k$ are coprime, we have that $a=1$, from which it follows that $C=C'$ producing no examples in this case. Thus we may assume that $\Aut(C)$ acts primitively on $\Omega$ and therefore, by \cite[Theorem~1.2]{liebler2014neighbour}, $\Aut(C)$ acts $2$-transitively on $\Omega$. We deal with the $2$-transitive groups in the remainder of the proof.
 
 If $\Aut(C)$ contains a sporadic $2$-transitive group $G$ then, by \cite[Theorem~1]{neunhoffer2014sporadic}, $C$, $G$ and $k$ are as in one of the lines 1--22 of \cite[Table~1]{neunhoffer2014sporadic}, where we require that $v$ in that table is equal to $2k+1$. It follows that $G$ is one of $\PSL_2(11)$ with $k=5$, $\alt_7$ with $k=7$ or $\mg_{23}$ with $k=11$. If $G=\PSL_2(11)$ and $k=5$ then $C$ has minimum distance $\delta=1$ in $O_6$. If $G=\alt_7$ and $k=7$ then $C$ is the set of all planes of $\pg_3(2)$ and, since there exists a pair of planes whose intersection is a single point, $C$ has minimum distance $\delta=3$ in $O_8$.  If $G=\mg_{23}$ and $k=11$ then $C$ is the set of endecads, that is, the set of all subsets with characteristic vector being a weight $11$ codeword in the perfect Golay code $\G_{23}$. A pair of endecads intersect in either 6 or 7 points, and hence $C$ has minimum distance $\delta=7$ in $O_{12}$. From \cite[Table~1]{neunhoffer2014sporadic} again, we have that the stabiliser of a codeword $\alpha$ is $\mg_{11}$. Since $\mg_{11}$ acts transitively on the $11$ points of $\alpha$ and transitively on the $12$ points of $\overline{\alpha}$, it follows that $C$ is $2$-neighbour-transitive.
 
 Finally we consider the infinite families of $2$-transitive groups. First, $G$ is not a symplectic group, since each $2$-transitive action of a symplectic group has even degree. Thus one of \cite[Theorems~1.3 or 1.4]{liebler2014neighbour} holds. If part (a) of \cite[Theorem~1.3]{liebler2014neighbour} holds then there exists a prime power $q$ such that $2k+1=q^3+1$ and $k=q+1$, which implies that $q^3/2=q+1$, a contradiction. Part (b) of \cite[Theorem~1.3]{liebler2014neighbour} does not occur since $G$ has even degree. Next we consider the codes under \cite[Theorem~1.4]{liebler2014neighbour} as in Table 2. The affine cases in that table can be ruled out, in the first case, since $2k+1=q^n$ and $k=q^s$ for a prime power $q$ is impossible, and, in the second case, since $2k+1\neq 16$. The first linear case in \cite[Table~2]{liebler2014neighbour} requires that there exists an even prime power $q=2^t$ such that
 \[
  2k+1=\frac{q^d-1}{q-1}\quad\text{and}\quad k=\frac{q^s-1}{q-1}.
 \]
 This implies that $2^{dt-1}-2^{st}-2^{t-1}+1=0$. Thus $t=1$ and $s=d-1$, giving the example in part (2) of the statement of this result. Note that since the stabiliser of a hyperplane acts transitively on the set of points on the hyperplane as well as the set of points in the complement of the hyperplane, the code is $2$-neighbour-transitive. In the second linear case of \cite[Table~2]{liebler2014neighbour} we have that there exists an even prime power $q_0\geq 4$ such that $2q_0+2=q_0^2$, which does not occur. Parts (a) and (b) of \cite[Theorem~1.4]{liebler2014neighbour} do not occur, as the degree of $G$ is even in each case, and part (c) is eliminated by \cite[Proposition 13]{durante2014sets}. This completes the proof.
\end{proof}

\section{Intransitive case}\label{intransitiveSection}

In this section, we address the case where $C$ is a code in $\varGamma=K(n,k)$ and $\Aut(C)$ acts intransitively on $\Omega$. In particular, this means that we may assume that there exists disjoint non-empty sets $U$ and $V$ with $\Omega=U\cup V$ such that $\Aut(C)\leq G$, where $G\cong \Sym(U)\times\Sym(V)$. Next, we define codes with respect to a $G$-invariant function, for such a $G$, $U$ and $V$.

\begin{definition}\label{def-intransCodes}
 Let $\varGamma=K(n,k)$, let $a$ and $b$ be non-negative integers such that $a+b=k$, and let $\Omega$ be the disjoint union $U\cup V$ with $|U|=u$ and $|V|=v$. For $\alpha\in V(\varGamma)$ let $\iota(\alpha)=(|\alpha\cap U|,|\alpha\cap V|)$. Define 
  \[
   C_{{\rm int}}(u,v;a,b)=\{\alpha\in V(\varGamma)\mid \iota(\alpha)=(a,b)\}.
  \]
\end{definition}

\begin{lemma}\label{lem-intransExamples}
 Let $U$, $V$ and $C=C_{{\rm int}}(u,v;a,b)$, as in Definition~\ref{def-intransCodes} where one of the following holds:
 \begin{enumerate}[{\rm (1)}]
  \item $u=1$ and $a=0$,
  \item $n=2k+1$, $u=2a$ and $v=2b+1$, or,
  \item $u=a$.
 \end{enumerate}
 Then $G=\Sym(U)\times\Sym(V)$ is a subgroup of $\Aut(C)$ and $C$ is $G$-neighbour-transitive.
\end{lemma}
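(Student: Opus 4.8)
The plan is to exploit the fact that the $G$-orbits on $V(\varGamma)$, for $G=\Sym(U)\times\Sym(V)$, are exactly the \emph{type classes} $T_c=\{\gamma\in V(\varGamma)\mid \iota(\gamma)=(c,k-c)\}$, one for each admissible $c$ (those with $\max(0,k-v)\leq c\leq\min(u,k)$). Indeed, given two $k$-subsets of the same type, an element of $\Sym(U)$ matches their intersections with $U$ and an element of $\Sym(V)$ matches their intersections with $V$, so each $T_c$ is a single $G$-orbit; conversely $\iota$ is $G$-invariant. Since $C=C_{{\rm int}}(u,v;a,b)=T_a$, this immediately yields that $G$ is transitive on $C_0=C$. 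It also yields $G\leq\Aut(C)$: each $g\in G$ stabilises $U$ and $V$ setwise, so lies in $\Aut(\varGamma)=\Sym(\Omega)$ and preserves $\iota$, whence it fixes $C=T_a$ setwise. These observations dispose of every assertion except transitivity on $C_1$.

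Because $G\leq\Aut(C)$ preserves the distance partition, $C_1$ is $G$-invariant, hence a union of type classes; so it suffices to show that in each case $C_1$ equals a \emph{single} class $T_{c_0}$. The criterion I would use is: a vertex $\gamma\in T_c$ lies in $C_1$ if and only if $c\neq a$ (so $\gamma\notin C$) and $\gamma$ is disjoint from some codeword $\alpha\in T_a$; and such an $\alpha$ exists precisely when one can choose an $a$-subset of $U\setminus(\gamma\cap U)$ and a $b$-subset of $V\setminus(\gamma\cap V)$, i.e. when $a\leq u-c$ and $b\leq v-(k-c)$.

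I would then run this criterion through the three cases. In case (1) ($u=1$, $a=0$) the only admissible non-codeword type is $c=1$, for which the inequalities reduce to $k\leq n-k$; this holds since $n\geq 2k+1$, so $C_1=T_1$. In case (2) ($n=2k+1$, $u=2a$, $v=2b+1$) the two inequalities give $c\leq a$ and $c\geq a-1$, so the neighbour types are $c\in\{a-1,a\}$; deleting the codeword type $a$ leaves $C_1=T_{a-1}$ (note $a\geq1$ as $U\neq\emptyset$), and the complement $\Omega\setminus\alpha$ of a codeword, of size $k+1$ and splitting as $a$ points of $U$ and $b+1$ of $V$, confirms $T_{a-1}\subseteq C_1$. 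In case (3) ($u=a$) the inequality $a\leq u-c$ forces $c\leq0$, so the only neighbour type is $c=0$; since $a=u\geq1\neq0$ this class consists of non-codewords, and the remaining inequality $b\leq v-k$ (equivalently $2k\leq n$) gives $T_0\subseteq C_1$, so $C_1=T_0$.

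The only genuinely non-formal point — and the one I expect to be the main obstacle — is the converse containment $T_{c_0}\subseteq C_1$ in each case, namely that \emph{every} vertex of the candidate type really is disjoint from some codeword, not merely that all neighbours of $C$ carry that type; transitivity on $C_1$ would fail if $C_1$ were a proper $G$-invariant subset of a type class, or if two distinct type classes both met $C_1$. Both issues are settled by the existence count above together with the standing hypothesis $n\geq 2k+1$, through the inequalities $k\leq n-k$ (case (1)) and $2k\leq n$ (case (3)), and the exact complement bookkeeping when $n=2k+1$ (case (2)); I would also record $a\geq1$ in cases (2) and (3) so that the codeword and neighbour types are genuinely distinct.
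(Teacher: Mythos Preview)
Your proof is correct and follows essentially the same approach as the paper: both identify the $G$-orbits on $V(\varGamma)$ with the type classes $T_c$, deduce $G\leq\Aut(C)$ and transitivity on $C$, and then in each case determine the unique non-codeword type occurring among neighbours of a codeword to conclude that $C_1$ is a single $G$-orbit. The only stylistic difference is that the paper argues from the codeword side (listing the types appearing in $\varGamma_1(\alpha)$) whereas you argue from the neighbour side via the adjacency criterion $a\leq u-c$, $b\leq v-(k-c)$; your version is a touch more careful about the containment $T_{c_0}\subseteq C_1$, which the paper handles implicitly.
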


\begin{proof}
 Let $g\in G$ and $\alpha\in C$. Then, since $U^g=U$ and $V^g=V$ it follows that $|\alpha^g\cap U|=|\alpha\cap U|$ and $|\alpha^g\cap V|=|\alpha\cap V|$. Hence $\iota(\alpha^g)=\iota(\alpha)=(a,b)$, which implies that $\alpha^g \in C$ and $g\in\Aut(C)$. Thus $G\leq\Aut(C)$. For any $\alpha\in C$ we have that $\alpha\cap U\in{U \choose a}$ and $\alpha\cap V\in{V \choose b}$ and, conversely, for any $\mu\in{U \choose a}$ and $\nu\in{V \choose b}$ we have that $\iota(\mu\cup\nu)=(a,b)$ and $\mu\cup \nu\in C$. Since $\Sym(U)$ acts transitively on ${U\choose a}$ and $\Sym(V)$ acts transitively on ${U\choose b}$, we deduce that $G$ acts transitively on $C$. Note that $\Sym(U)$ acts transitively on ${U\choose a}$ and $\Sym(V)$ acts transitively on ${U\choose b}$ for any $0\leq a\leq u$ and $0\leq b\leq v$, which implies that $G$ acts transitively on the set of all vertices of a given type, a fact that we shall use repeatedly in this proof.
 
 Suppose part (1) holds, that is, $u=1$ and $a=0$. This implies that $|V|=n-1$ and $b=k$. It follows that $C={V\choose k}$. If $\alpha\in C$ and $\beta\in \varGamma_1(\alpha)$ then either $\iota(\beta)=(0,k)$, in which case $\beta\in C$, or $\iota(\beta)=(1,k-1)$. Hence $C_1\subseteq\{\gamma\in V(\varGamma)\mid \iota(\gamma)=(1,k-1)\}$. Since $G$ acts transitively on the set of all vertices of type $(1,k-1)$, and $G\leq \Aut(C)$, it follows that $C_1=\{\gamma\in V(\varGamma)\mid \iota(\gamma)=(1,k-1)\}$ and $C$ is $G$-neighbour-transitive.
 
 Suppose part (2) holds. Let $\alpha\in C$. Then $|U\setminus \alpha|=a$ and $|V\setminus \alpha|=b+1$. Thus, if $\beta\in\varGamma_1(\alpha)$ then either $\beta$ has type $(a,b)$, in which case $\beta\in C$, or $\beta$ has type $(a-1,b+1)$. Hence $C_1$ is a subset of the set of all vertices of type $(a-1,b+1)$. Since $G$ acts transitively on the set of all vertices of type $(a-1,b+1)$, and $G\leq \Aut(C)$, it follows that $C_1=\{\gamma\in V(\varGamma)\mid \iota(\gamma)=(a-1,b+1)\}$ and $C$ is $G$-neighbour-transitive.
 
 Suppose part (3) holds. If $\alpha\in C$ then $|U\setminus \alpha|=0$ and $|V\setminus \alpha|=n-k$. Thus, if $\beta\in\varGamma_1(\alpha)$ then $\beta$ has type $(0,k)$ so that $C_1\subseteq\{\gamma\in V(\varGamma)\mid \iota(\gamma)=(0,k)\}$. Again, since $G$ acts transitively on the set of all vertices of type $(0,k)$, and $G\leq \Aut(C)$, it follows that $C_1=\{\gamma\in V(\varGamma)\mid \iota(\gamma)=(0,k)\}$ and $C$ is $G$-neighbour-transitive.
\end{proof}

\begin{lemma}\label{lem-intrans}
 Let $C$ be a non-trivial neighbour-transitive code in $K(n,k)$ and suppose there exist non-empty disjoint subsets $U,V\subseteq \Omega$ such that $\Omega=U\cup V$ and $\Aut(C)\leq \Sym(U)\times \Sym(V)$. In particular, $\Aut(C)$ acts intransitively on $\Omega$. Then, up to interchanging $U$ and $V$, one of the following holds.
 \begin{enumerate}[{\rm (1)}]
  \item $\delta=1$, $|U|=1$ and $\alpha\subseteq V$ for all $\alpha\in C$.
  \item $\delta=1$, $n=2k+1$ and there exists an integer $a<k$ such that $|U|=2a$ and $|\alpha\cap U|=a$ for all $\alpha\in C$.
  \item $\delta\geq 2$, $|U|<k$ and $U\subseteq\alpha$ for all $\alpha\in C$.
 \end{enumerate}
\end{lemma}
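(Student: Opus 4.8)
The plan is to run everything through the $G$-invariant ``intersection type'' of a vertex. Put $G=\Sym(U)\times\Sym(V)$, so that $\Aut(C)\leq G\leq\Aut(\varGamma)$, write $u=|U|$ and $v=|V|$, and for a vertex $\alpha$ set $\iota(\alpha)=(|\alpha\cap U|,|\alpha\cap V|)$. Since every $g\in G$ fixes $U$ and $V$ setwise, $\iota$ is $G$-invariant, so Lemma~\ref{invlemma} applies with this $G$. Part~(1) gives a single type $(a,b)$, with $a+b=k$, shared by all codewords, and a single type shared by all of $C_1$. Because $U$ and $V$ are both non-empty and $k\geq 2$, the vertices of $\varGamma$ already carry more than one type; hence $C\neq V\varGamma$ and $C_1\neq\varnothing$.

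The key computation is to list the types realised among the neighbours of a fixed codeword $\alpha$. A neighbour of $\alpha$ is a $k$-subset of $\overline{\alpha}$, and $\overline{\alpha}$ meets $U$ in $p:=u-a$ points and $V$ in $q:=v-b$ points, where $p+q=n-k$. Thus the neighbour types are exactly the pairs $(a',k-a')$ with $\max(0,k-q)\leq a'\leq\min(p,k)$, a block of consecutive integers of size $\min(p,k)+\min(q,k)-k+1$. By Lemma~\ref{invlemma}(2) this set of types lies inside $\{(a,b),(a_1,b_1)\}$, where $(a_1,b_1)$ is the type of $C_1$; so it has at most two elements, and by Lemma~\ref{invlemma}(3) exactly one element when $\delta\geq 2$.

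I would treat $\delta\geq 2$ first, as it is the cleanest. Here there is a single neighbour type, so $\min(p,k)+\min(q,k)=k$; combined with $p+q=n-k\geq k+1$, a short case check on the sizes of $p,q$ relative to $k$ shows the only possibilities are $p=0$ or $q=0$ (the regime $p,q\leq k$ would force $n=2k$, excluded by hypothesis, and $p,q\geq k$ forces $k\leq 0$). After interchanging $U$ and $V$ we may take $p=0$, that is $U\subseteq\alpha$ and $a=u$; non-triviality rules out $u=k$ (else $\alpha=U$ is forced and $|C|=1$), giving $u<k$ and conclusion~(3).

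For $\delta=1$ some two codewords are adjacent, so by transitivity every codeword has a codeword-neighbour and $(a,b)$ is itself a neighbour type; hence there are exactly two neighbour types and $\min(p,k)+\min(q,k)=k+1$. The same case check now yields, up to interchanging $U$ and $V$, either $p=1$ or the ``balanced'' regime $p,q\leq k-1$ with $n=2k+1$. When $p=1$, forcing $(a,b)$ to be a neighbour type pins $a\in\{0,1\}$; the branch $a=0$ gives $u=1$ and $\alpha\subseteq V$, which is conclusion~(1). The remaining configurations should then be assembled, after a further swap of $U$ and $V$ and a parity computation, into the template $n=2k+1$, $u=2a$, $|\alpha\cap U|=a$ of conclusion~(2). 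I expect this last bookkeeping to be the main obstacle: one must verify that every surviving parameter configuration in the $\delta=1$ analysis genuinely collapses into one of the two stated templates, and in particular that the balanced case is exactly the odd-graph condition $n=2k+1$ together with $|\alpha\cap U|=|U|/2$.
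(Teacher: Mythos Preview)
Your approach—encoding the $G$-invariant as the pair $(|\alpha\cap U|,|\alpha\cap V|)$, invoking Lemma~\ref{invlemma}, and counting the consecutive block of neighbour types via $\min(p,k)+\min(q,k)$—is essentially the paper's argument, just packaged slightly differently (the paper works directly with the set $S$ of admissible pairs). Your $\delta\geq 2$ case is clean and matches the paper's. The balanced sub-case of $\delta=1$ also works out as you predict: when $p,q\leq k-1$ one gets $n=2k+1$, and $a\in\{p-1,p\}$ yields $|U|=2a$ after at most one further swap.

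The obstacle you flag at the end, however, is genuine and not mere bookkeeping. In the branch $p=1$, $a=1$ you obtain $|U|=2$ and neighbour types $\{(0,k),(1,k-1)\}$, but the only constraint on $q$ is $\min(q,k)=k$, i.e.\ $q\geq k$; nothing forces $q=k$. When $q>k$ you have $n=k+1+q>2k+1$, and this configuration does \emph{not} fit conclusion~(2); swapping $U$ and $V$ would require $|V|=2(k-1)$ and hence $n=2k$, which is excluded. Concretely, take $k=2$, $n=6$, $|U|=2$, and let $C$ consist of all $2$-subsets meeting $U$ in exactly one point. Then $\Aut(C)=\Sym(U)\times\Sym(V)\cong\s_2\times\s_4$, the code is neighbour-transitive with $\delta=1$ and $C_1=\binom{V}{2}$, yet none of (1)--(3) hold, even after swapping. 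The paper's own proof shares this gap: its implication ``$(a-2,b+2)\notin S\Rightarrow 2b+2>|V|$'' is only valid when $a\geq 2$, since for $a=1$ the pair $(a-2,b+2)$ already violates the non-negativity condition regardless of $|V|$.
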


\begin{proof}
 For any vertex $\alpha$ of $K(n,k)$, define $\iota(\alpha)=(|\alpha\cap U|,|\alpha\cap V|)$. Then, since $U$ and $V$ are fixed setwise by $\Aut(C)$, it follows that $\iota$ is $\Aut(C)$-invariant. Hence, by Lemma~\ref{invlemma}, there exist integers $a,b$, with $0\leq a\leq |U|$ and $0\leq b\leq |V|$, such that for all $\alpha\in C$ we have $\iota(\alpha)=(a,b)$. Let $\alpha\in C$. Then, for any integers $c,d\geq 0$ satisfying $c+d=k$, $c\leq |U|-a$ and $d\leq |V|-b$, there exist $\mu\in{{U\setminus\alpha}\choose c}$ and $\nu\in{{U\setminus\alpha}\choose d}$ such that $\mu\cup\nu\in\varGamma_1(\alpha)$ and $\iota(\mu\cup\nu)=(c,d)$. Conversely, if $\beta\in\varGamma_1(\alpha)$ with $\iota(\beta)=(c,d)$ then $c,d\geq 0$, $c+d=k$, $c\leq |U|-a$ and $d\leq |V|-b$. Hence, the number of different types of vertices in $\varGamma_1(\alpha)$ is the size of the set
 \[
  S=\{(c,d)\in\Z\times \Z\mid  c,d\geq 0,\, c+d=k,\, a+c\leq |U|,\, b+d\leq |V|\}.
 \]
 Since we are assuming that $C_1$ is non-empty, we have, by Lemma~\ref{invlemma}, that either $|S|=1$, or $\delta=1$ and $|S|=2$.
 
 First, suppose that $|S|=1$. Moreover, suppose that $(0,k)\in S$. This implies that $a\leq |U|$ and $b+k\leq |V|$. Since $|S|=1$ we have that $(1,k-1)\notin S$. Hence, $a+1> |U|$ which gives $|U|=a$, and part (3) occurs. A similar argument holds, upon interchanging $U$ and $V$, if $(k,0)\in S$. Thus, we may assume that $S=\{(e,f)\}$ with $e,f>0$, in which case $a+e\leq |U|$ and $b+f\leq |V|$ hold. Since $|S|=1$, we have that $(e+1,f-1)\notin S$ and, since $f-1\geq 0$ and $b+f-1\leq|V|$ hold, it follows $a+e+1>|U|$. The two inequalities involving $|U|$ together imply that $a+e=|U|$. Also $(e-1,f+1)\notin S$, and, since $e-1\geq 0$ and $a+e-1\leq|U|$ hold, we have $b+f+1>|V|$. Thus $b+f=|V|$. However, this implies that $n=|U|+|V|=a+b+e+f=2k$, which contradicts $n\geq 2k+1$. 
 
 Assume now that $|S|=2$, and hence $\delta=1$. Then $C\cap \varGamma_1(\alpha)$ is non-empty, and hence $(a,b)\in S$, which implies that $2a\leq|U|$ and $2b\leq|V|$. Suppose that $(a,b)=(0,k)$. Then $(0,k)\in S$ and, since we are assuming that $|U|\geq 1$, we also have that $(1,k-1)$ satisfies the conditions to be in $S$. Thus $S=\{(0,k),(1,k-1)\}$. In particular, $(2,k-2)\notin S$, which implies that $2>|U|$, that is, $|U|=1$, and part (1) holds. A similar argument holds for $(a,b)=(k,0)$, and hence we may assume that $a,b>0$. Up to interchanging $U$ and $V$, we may then assume that $S=\{(a,b),(a-1,b+1)\}$. Since $(a+1,b-1)$ is not in $S$, we have that $2a+1>|U|$, and hence $|U|=2a$. Since $(a-2,b+2)$ is not in $S$, we have that $2b+2>|V|$, and hence $|V|=2b$ or $2b+1$. If $|V|=2b$ then $n=|U|+|V|=2a+2b=2k$, contradicting $n\geq 2k+1$. Thus $|V|=2b+1$ so that $n=2a+2b+1=2k+1$, and part (2) holds.
\end{proof}

In the next two lemmas we prove that the automorphism group of a neighbour-transitive code $C$ in a Kneser graph has at most two orbits on $\Omega$. The next lemma introduces the subsets $U_0$ and $U_1$ that consist of the elements of $\Omega$ contained, respectively, in no codeword or in all codewords of $C$. 

\begin{lemma}\label{lem-U0U1orbits}
 Let $C$ be a code in $K(n,k)$ and let $U_0=\Omega \setminus ( \bigcup_{\alpha\in C} \alpha )$ and $U_1=\bigcap_{\alpha\in C} \alpha$. Then $\Aut(C)$ induces $\Sym(U_0)$ on $U_0$ and $\Sym(U_1)$ on $U_1$. In particular, $U_0$ and $U_1$ are $\Aut(C)$-orbits.
\end{lemma}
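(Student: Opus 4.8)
The plan is to prove two complementary facts: that $U_0$ and $U_1$ are preserved setwise by $\Aut(C)$, so that $\Aut(C)$ induces a \emph{subgroup} of $\Sym(U_0)$ (respectively $\Sym(U_1)$); and that these induced subgroups are in fact the \emph{whole} symmetric group, which I would establish by exhibiting $\Sym(U_0)$ and $\Sym(U_1)$ as literal subgroups of $\Aut(C)$ inside $\Sym(\Omega)$.

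First I would observe that both $U_0$ and $U_1$ are defined intrinsically from the codeword set $C$: since any $g\in\Aut(C)$ permutes $C$, it fixes $\bigcup_{\alpha\in C}\alpha$ and $\bigcap_{\alpha\in C}\alpha$ setwise, and hence fixes $U_0=\Omega\setminus\bigcup_{\alpha\in C}\alpha$ and $U_1=\bigcap_{\alpha\in C}\alpha$ setwise. Thus restriction yields homomorphisms $\Aut(C)\ra\Sym(U_0)$ and $\Aut(C)\ra\Sym(U_1)$ whose images lie in $\Sym(U_0)$ and $\Sym(U_1)$ respectively.

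The key step is the reverse containment. Recalling that $\Aut(\varGamma)=\Sym(\Omega)$, I may regard $\Sym(U_0)$ as the subgroup of $\Sym(\Omega)$ fixing $\Omega\setminus U_0$ pointwise. By definition of $U_0$, every codeword $\alpha\in C$ is disjoint from $U_0$, so $\alpha\subseteq\Omega\setminus U_0$; any $g\in\Sym(U_0)$ therefore fixes $\alpha$ pointwise, so certainly $C^g=C$ and $g\in\Aut(C)$. Hence $\Sym(U_0)\leq\Aut(C)$, and the image of $\Aut(C)$ in $\Sym(U_0)$ contains, and therefore equals, $\Sym(U_0)$. Symmetrically, regarding $\Sym(U_1)$ as the pointwise stabiliser of $\Omega\setminus U_1$, each $\alpha\in C$ satisfies $U_1\subseteq\alpha$, so $\alpha=U_1\cup(\alpha\setminus U_1)$ with $\alpha\setminus U_1\subseteq\Omega\setminus U_1$; any $g\in\Sym(U_1)$ fixes $U_1$ setwise and $\alpha\setminus U_1$ pointwise, whence $\alpha^g=\alpha$ and $g\in\Aut(C)$. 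Thus the group induced on $U_1$ is all of $\Sym(U_1)$ as well.

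Finally, since $\Sym(U_0)$ and $\Sym(U_1)$ act transitively on $U_0$ and $U_1$ (when these are non-empty), $U_0$ and $U_1$ are single $\Aut(C)$-orbits, completing the argument. I do not expect any serious obstacle: the only point requiring care is the embedding of each symmetric group into $\Sym(\Omega)$ as the pointwise stabiliser of the complement, together with the verification that such permutations fix every codeword—which is immediate from the defining properties of $U_0$ and $U_1$.
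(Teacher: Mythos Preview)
Your proof is correct and follows essentially the same approach as the paper: both arguments embed $\Sym(U_0)$ and $\Sym(U_1)$ into $\Sym(\Omega)$ as pointwise stabilisers of their complements, verify that such permutations fix every codeword (using $\alpha\cap U_0=\emptyset$ and $U_1\subseteq\alpha$), and separately check that $\Aut(C)$ preserves $U_0$ and $U_1$ setwise. The only cosmetic difference is the order of presentation.
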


\begin{proof}
 Let $\sigma\in\Sym(U_0)$. Define $x_\sigma\in\Sym(\Omega)$ by $a^{x_\sigma}=a^\sigma$ if $a\in U_0$ and $a^{x_\sigma}=a$ if $a\in \Omega\setminus U_0$. Then, for any $\alpha\in C$, we have that $\alpha\cap U_0$ is empty, and hence $\alpha^{x_\sigma}=\alpha$. Thus $x_\sigma\in\Aut(C)$. Let $\tau\in\Sym(U_1)$. Define $x_\tau\in\Sym(\Omega)$ by $a^{x_\tau}=a^\tau$ if $a\in U_1$ and $a^{x_\tau}=a$ if $a\in \Omega\setminus U_1$. Then, for any $\alpha\in C$, we have that $\alpha\cap U_1=U_1$, and hence $\alpha^{x_\tau}=\alpha$. Thus $x_\tau\in\Aut(C)$. It remains to show that $\Aut(C)$ fixes both $U_0$ and $U_1$ setwise. Let $x\in\Aut(C)$. Then, since $x$ fixes $C$ setwise, we have that $U_0^x=\Omega^x \setminus ( \bigcup_{\alpha\in C} \alpha^x )=\Omega \setminus ( \bigcup_{\alpha\in C} \alpha )=U_0$ and $U_1^x=\bigcap_{\alpha\in C} \alpha^x=\bigcap_{\alpha\in C} \alpha=U_1$. Thus the result holds.
\end{proof}

\begin{lemma}\label{intransTwoOrbits}
 Let $C$ be a neighbour-transitive code in $K(n,k)$. Then $\Aut(C)$ has at most $2$ orbits on $\Omega$.
\end{lemma}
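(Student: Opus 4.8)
The plan is to analyse the orbits of $\Aut(C)$ on $\Omega$ through the $\Aut(C)$-invariant map recording, for each vertex, its distribution across these orbits, and then to bound the number of orbits by controlling how many distinct distributions can occur among the neighbours of a fixed codeword.

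First I would set up notation. Let $O_1,\ldots,O_r$ be the orbits of $\Aut(C)$ on $\Omega$ and define $\iota(\gamma)=(|\gamma\cap O_1|,\ldots,|\gamma\cap O_r|)$, which is $\Aut(C)$-invariant. By Lemma~\ref{lem-U0U1orbits} the sets $U_0$ and $U_1$ are among the $O_i$. Fix $\alpha\in C$; since $\Aut(C)$ is transitive on $C$ and preserves each orbit setwise, $a_i:=|\alpha\cap O_i|$ is independent of the choice of $\alpha$. As $\varGamma_1(\alpha)={\overline{\alpha}\choose k}$, the distinct types occurring on $\varGamma_1(\alpha)$ correspond exactly to the integer tuples $(c_1,\ldots,c_r)$ with $\sum c_i=k$ and $0\le c_i\le s_i$, where $s_i:=|O_i|-a_i$ is the size of $O_i\cap\overline{\alpha}$. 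The parts of positive size are precisely $U_0$ (if non-empty) and each orbit meeting $W:=\Omega\setminus(U_0\cup U_1)$: indeed $s=|U_0|$ on $U_0$ and $s=0$ on $U_1$, while any orbit $\Delta\subseteq W$ satisfies $0<a_\Delta<|\Delta|$ (a point of $W$ lies in some but not all codewords), so $s_\Delta>0$. Writing $p$ for the number of positive-size parts, the number of orbits is $r=p+[U_1\neq\emptyset]$.

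The combinatorial heart of the argument is the following claim: if there are $p$ parts of positive size and $1\le k\le N-1$, where $N=\sum s_i=n-k$ (note $k\le N-1$ since $n\ge 2k+1$), then the number of admissible tuples is $1$ exactly when $p=1$, and is at least $3$ when $p\ge 3$. I would prove this using the fact that the admissible tuples are connected under unit shifts: if $c\neq c'$ have equal sum, pick $i$ with $c_i<c_i'$ and $j$ with $c_j>c_j'$, so that $c+e_i-e_j$ is admissible and strictly closer to $c'$. Consequently, if only two tuples existed they would differ by a single unit shift $j\to i$ with no other shift available; examining which parts can be neither full nor empty then forces every part other than $i,j$ to have size $0$, so $p\le 2$. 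The same shift argument applied to a single tuple forces $p\le 1$. This bookkeeping of admissible shifts, rather than any single slick identity, is the step I expect to be the main obstacle.

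Finally I would combine this with Lemma~\ref{invlemma}. If $\delta\ge 2$, part (3) of that lemma shows all vertices of $\varGamma_1(\alpha)$ share a type, so there is exactly one tuple; hence $p=1$ and $r=1+[U_1\neq\emptyset]\le 2$. If $\delta=1$, then $C$ contains two disjoint codewords, so $U_1=\bigcap_{\beta\in C}\beta=\emptyset$ and $r=p$; moreover part (2) of Lemma~\ref{invlemma}, applied with any $\beta\in C_1$, shows $\varGamma_1(\alpha)$ carries at most two types, so there are at most two admissible tuples, giving $p\le 2$ and hence $r\le 2$. (The degenerate case $C_1=\emptyset$ forces $C=V\varGamma$ and a single orbit.) This completes the proof.
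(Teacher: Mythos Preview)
Your proof is correct and takes a genuinely different route from the paper's. The paper argues by contradiction: assuming three orbits $W_1,W_2,W_3$, it applies Lemma~\ref{lem-intrans} to each of the three splittings $W_i$ versus $W_j\cup W_\ell$, and then (for $\delta=1$) runs a parity argument on the $|W_i|$ to reach a contradiction with Lemma~\ref{invlemma}, while (for $\delta\geq 2$) it combines Lemma~\ref{lem-intrans}(3) with Lemma~\ref{lem-U0U1orbits} to force each $W_i$ into every codeword. Your argument bypasses Lemma~\ref{lem-intrans} entirely: you count directly the number of distinct $\iota$-types on $\varGamma_1(\alpha)$ as the number of bounded integer compositions of $k$, prove the elementary ``connectedness under unit shifts'' lemma to show that one type forces $p=1$ and two types force $p\leq 2$, and then read off $r\leq 2$ using the observation that $U_1$ is the only orbit with $s_i=0$ (and that $U_1=\emptyset$ when $\delta=1$). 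What your approach buys is independence from the case analysis of Lemma~\ref{lem-intrans} and a proof that works uniformly for any number of putative orbits; what the paper's approach buys is that it re-uses the structural classification already established for the two-orbit situation, making the logical dependency of the section more linear. Both are short; yours is arguably the more self-contained of the two.
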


\begin{proof}
 Suppose, to the contrary, that $W_1,W_2,W_3$ are pairwise disjoint, non-empty subsets of $\Omega$ such that $W_1\cup W_2\cup W_3=\Omega$ and $\Aut(C)\leq \Sym(W_1)\times\Sym(W_2)\times\Sym(W_3)$.
 
 First, suppose that $\delta=1$. We shall apply Lemma~\ref{lem-intrans} in three different ways: (i) on the sets $W_1$ and $W_2\cup W_3$, (ii) on the sets $W_2$ and $W_1\cup W_3$, and (iii) on the sets $W_3$ and $W_1\cup W_2$. Since $|W_i\cup W_j|\geq 2$ whenever $i$ and $j$ are distinct, if Lemma~\ref{lem-intrans}~(1) holds in each of the cases (i), (ii) and (iii), then it follows that $|W_1|=|W_2|=|W_3|=1$ and $n=3$. However, we are assuming $k\geq 2$, so that $n\geq 5$. Thus Lemma~\ref{lem-intrans}~(2) holds in at least one of the cases (i), (ii) or (iii), which implies that $n=2k+1$. Since $n$ is odd, either exactly one of $|W_1|,|W_2|,|W_3|$ is odd, or all three are. Define, for any vertex $\alpha$, the $\Aut(C)$-invariant $\iota(\alpha)=(|\alpha\cap W_1|,|\alpha\cap W_2|,|\alpha\cap W_3|)$. Note that when $n=2k+1$ and either part (1) or (2) of Lemma~\ref{lem-intrans} holds (for the respective cases (i), (ii) and (iii)), then for any $\alpha\in C$, if $|W_i|$ is even then $|\alpha\cap W_i|=|W_i|/2$, and if $|W_i|$ is odd then $|\alpha\cap W_i|=(|W_i|-1)/2$. Thus, if $|W_1|,|W_2|,|W_3|$ are all odd, we have that $k=(|W_1|+|W_2|+|W_3|-3)/2=(n-3)/2\neq (n-1)/2$. Thus, precisely one of $|W_1|,|W_2|,|W_3|$ is odd, and so we may assume that $|W_1|$ is odd. Let $a_1=(|W_1|-1)/2$, $a_2=|W_2|/2$ and $a_3=|W_3|/2$. Then, for every $\alpha\in C$ we have that $\iota(\alpha)=(a_1,a_2,a_3)$. However, there exist elements of $\varGamma_1(\alpha)$ of type $(a_1+1,a_2-1,a_3)$ and $(a_1+1,a_2,a_3-1)$, contradicting Lemma~\ref{invlemma}.
 
 Now suppose $\delta\geq 2$. Again, Lemma~\ref{lem-intrans}~(3) applies in three different ways (i) on the sets $W_1$ and $W_2\cup W_3$, (ii) on the sets $W_2$ and $W_1\cup W_3$, and (iii) on the sets $W_3$ and $W_1\cup W_2$, so that one set of each pair is a subset in every codeword of $C$. Since $\Aut(C)$ is not transitive on $W_1\cup W_2$, it follows from Lemma~\ref{lem-U0U1orbits} that $W_1\cup W_2$ is not a subset of $U_1$, as defined in Lemma~\ref{lem-U0U1orbits}. Similarly, $W_1\cup W_3$ and $W_2\cup W_3$ are not subsets of $U_1$. However, this implies that each set $W_1$, $W_2$ and $W_3$ are each contained in every codeword $\alpha\in C$, which implies that $k=n$, giving a contradiction and completing the proof.
\end{proof}

We are now in a position to prove Theorem~\ref{intransitiveParamThm}.

\begin{proof}[Proof of Theorem~\ref{intransitiveParamThm}]
 Theorem~\ref{intransitiveParamThm} follows from Lemmas~\ref{lem-intrans} and~\ref{intransTwoOrbits}, as well as the examples given in Lemma~\ref{lem-intransExamples}.
\end{proof}

To complete this section, we present an example of a code satisfying Theorem~\ref{intransitiveParamThm}~(2) that is a proper subset of the code $C_{{\rm int}}(5,8;2,4)$.

\begin{example}\label{ag32tetrahedronEx}
 Let $\Omega$ be the disjoint union $U\cup V$, where $|U|=5$ and $V$ is the set of points of the affine geometry $\ag_3(2)$. Furthermore, let $\T$ be the set of all \emph{tetrahedrons} of $V$, where a tetrahedron is set of $4$ point of $\ag_3(2)$ that do not form a $2$-flat (affine plane), and let $C$ be the code in $O_7=K(13,6)$ consisting of all vertices $\alpha$ such that $|\alpha\cap U|= 2$ and $\alpha\cap V\in\T$. 
\end{example}

\begin{lemma}
 Let $U$, $V$, $C$ and $\T$ be as in Example~\ref{ag32tetrahedronEx}. Then $C$ is a neighbour-transitive code in $O_7$ with $|C|=560$, minimum distance $\delta=1$ and automorphism group $\Aut(C)=\s_5\times \AGL_3(2)$. Moreover, $C$ satisfies Theorem~\ref{intransitiveParamThm}~(2).
\end{lemma}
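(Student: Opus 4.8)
The plan is to establish the five assertions—the cardinality, the minimum distance, the automorphism group, neighbour-transitivity, and agreement with Theorem~\ref{intransitiveParamThm}~(2)—mostly by direct counting, with one structural input from affine geometry. First I would compute $|C|$: a codeword is determined by a choice of $\alpha\cap U\in\binom{U}{2}$ together with a tetrahedron $\alpha\cap V\in\T$. There are $\binom{5}{2}=10$ choices for the former. For the latter, the $4$-subsets of the eight points of $\ag_3(2)$ number $\binom{8}{4}=70$, of which exactly $14$ are $2$-flats (the $7$ two-dimensional subspaces, each giving two cosets), so $|\T|=56$ and hence $|C|=10\cdot 56=560$. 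For the minimum distance I would record the key fact that the complement in $V$ of a tetrahedron is again a tetrahedron (complements of $2$-flats are $2$-flats): then choosing $\alpha\cap U,\beta\cap U$ to be disjoint $2$-subsets of $U$ and $\beta\cap V=V\setminus(\alpha\cap V)$ yields two disjoint codewords, so $\delta=1$.

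The heart of the argument is the determination of $\Aut(C)$. The inclusion $\s_5\times\AGL_3(2)\leq\Aut(C)$ is immediate, since $\Sym(U)$ preserves $\binom{U}{2}$ and fixes $V$ pointwise, while $\AGL_3(2)$ preserves $\T$ and fixes $U$ pointwise. For the reverse inclusion I would first show that $\Aut(C)$ preserves the partition $\{U,V\}$: the map sending a point $p\in\Omega$ to the number of codewords containing $p$ is $\Aut(C)$-invariant, and a short count gives $4\cdot 56=224$ on points of $U$ and $10\cdot 28=280$ on points of $V$ (each point of $V$ lies in $56\cdot 4/8=28$ tetrahedrons). As these differ, $U$ and $V$ are each $\Aut(C)$-invariant, so every $g\in\Aut(C)$ has the form $(g_U,g_V)\in\Sym(U)\times\Sym(V)$; since $g$ maps codewords to codewords, $g_V$ must preserve $\T$, hence also the set of $2$-flats.

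The main obstacle is then to prove that the subgroup of $\Sym(V)$ preserving the $2$-flats of $\ag_3(2)$ equals $\AGL_3(2)$. Over $\F_2$ the usual ``preserve collinearity'' form of the fundamental theorem is vacuous (lines have two points), so I would argue directly: after composing with a translation we may assume $g_V$ fixes the origin, and then for distinct nonzero $x,y\in V$ the $2$-flat $\{0,x,y,x+y\}$ maps to a $2$-flat through $0$, that is, a two-dimensional subspace $\{0,g_V(x),g_V(y),g_V(x)+g_V(y)\}$, forcing $g_V(x+y)=g_V(x)+g_V(y)$. Checking the degenerate cases shows $g_V$ is additive, hence $\F_2$-linear, so $g_V\in\GL_3(2)$ and the stabiliser of $\T$ is $\AGL_3(2)$. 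Combining, $\Aut(C)=\s_5\times\AGL_3(2)$.

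Finally I would verify neighbour-transitivity and the last claim. Transitivity on $C$ follows from $\s_5$ acting transitively on $\binom{U}{2}$ and $\AGL_3(2)$ acting transitively on the $56$ tetrahedrons, which are precisely the affine bases (affinely independent $4$-subsets) of $\ag_3(2)$. To handle $C_1$ I would use the invariant $\iota(\gamma)=(|\gamma\cap U|,|\gamma\cap V|)$ to show that $C_1$ consists exactly of the vertices of type $(3,3)$: any vertex disjoint from a codeword lies in a $(3,4)$-set and so has type $(2,4)$ or $(3,3)$, the former necessarily being a codeword (its $V$-part is the complement of a tetrahedron), while conversely every type-$(3,3)$ vertex is disjoint from a suitable codeword because any $5$-subset of $V$ contains at most one $2$-flat among its five $4$-subsets (two such would share three points and hence coincide) and so contains a tetrahedron. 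Transitivity of $\s_5\times\AGL_3(2)$ on $C_1\cong\binom{U}{3}\times\binom{V}{3}$ then follows since $\AGL_3(2)$ is transitive on the $3$-subsets of $V$ (all of which are affinely independent). Thus $C$ is neighbour-transitive, and taking the even orbit $V$ (with $|V|=8$ and $|\alpha\cap V|=4=|V|/2$) in the role of the distinguished orbit shows that $C$ satisfies Theorem~\ref{intransitiveParamThm}~(2).
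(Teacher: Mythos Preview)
Your proof is correct and follows essentially the same route as the paper: count tetrahedrons to get $|C|=560$, use that complements of tetrahedrons are tetrahedrons to produce disjoint codewords and hence $\delta=1$, identify $\Aut(C)$ via the stabiliser of $\{U,V\}$ and of $\T$, and verify neighbour-transitivity by showing $C_1$ is exactly the set of type-$(3,3)$ vertices on which $\s_5\times\AGL_3(2)$ acts transitively.

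Where you differ is in rigor rather than strategy. The paper simply asserts that $\Aut(C)$ fixes $U$ setwise and that the stabiliser of the set of $2$-flats inside $\Sym(V)$ equals $\AGL_3(2)$; you supply arguments for both---the codewords-through-a-point count ($224$ versus $280$) for the first, and a direct additivity argument over $\F_2$ for the second. You also justify the reverse inclusion $\{\text{type }(3,3)\}\subseteq C_1$ explicitly (any $5$-subset of $V$ contains a tetrahedron since two distinct $2$-flats cannot share three points), which the paper states without proof. These additions fill genuine gaps in the paper's exposition, so your version is strictly more complete while remaining the same proof in outline.
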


\begin{proof}
 There are ${8\choose 4}=70$ subsets of $V$ having size $4$, of which precisely $14$ are $2$-flats of $\ag_3(2)$. Hence $|\T|=56$. Moreover, there are ten $2$-subsets of $U$, which implies that $|C|=560$. We now determine $\Aut(C)$. The stabiliser of $U$ in $\Sym(\Omega)$ is $\s_5\times\s_8$ with the subgroup $\s_5$ in the first component acting transitively on ${U\choose 2}$. Acting on $V$, the stabiliser of $\T$ inside $\s_8$ is the stabiliser of the set of all $2$-flats, that is, $\AGL_3(2)$. Hence $\Aut(C)=\s_5\times \AGL_3(2)$. 
 
 Next we prove that $C$ is neighbour-transitive. If $e_1,e_2,e_3$ is a basis for the underlying vector space of $\ag_3(2)$ then every element of $\T$ is equivalent under $\AGL_3(2)$ to $\{0,e_1,e_2,e_3\}$. Since $\s_5$ acts transitively on ${U\choose 2}$ we have that $\Aut(C)$ acts transitively on $C$. Note that the complement, inside $V$, of a tetrahedron is again a tetrahedron, and hence if $\alpha\in C$ then $|\overline{\alpha}\cap U|=3$ and $\overline{\alpha}\cap V\in \T$. This implies that for every $\alpha\in C$ and $\beta\in\varGamma_1(\alpha)$ we have that either $\beta\in C$, or $|\beta\cap U|=3$ and $\beta\cap V$ is a triangle in $\ag_3(2)$. Thus $C_1$ is precisely the set of all vertices $\beta$ such that $|\beta\cap U|=3$ and $\beta\cap V$ is a triangle in $\ag_3(2)$. Since $\s_5$ acts transitively on ${U\choose 3}$ and $\AGL_3(2)$ acts transitively on triangles, it follows that $\Aut(C)$ acts transitively on $C_1$. Hence $C$ is neighbour-transitive. Finally, since $|\alpha\cap V|=|V|/2$ for all $\alpha\in C$, we have that $C$ satisfies part (2) of Theorem~\ref{intransitiveParamThm}.
\end{proof}

\section{Imprimitive case}\label{imprimitiveSection}

In this section, we consider codes $C$ in $\varGamma=K(2k+1,k)=O_{k+1}$ where the action of $\Aut(C)$ is imprimitive on $\Omega$. In this case, $\Aut(C)\leq G$ where $G\cong \s_a\wr\s_b$ is the stabiliser inside $\Sym(\Omega)$ of a partition $\B=\{B_1,\ldots,B_a\}$ of $\Omega$ into $a$ blocks each of size $b$, with $a,b\geq 2$. For any subset $\alpha\subseteq \Omega$, we define $\iota(\alpha)$ to be the multiset $\{|\alpha\cap B_1|,\ldots,|\alpha\cap B_a|\}$. Given a multiset $S$ we sometimes write $S=\{b_1^{a_1},\ldots,b_s^{a_s}\}$ where each $b_i$ is an element of $S$ that occurs with multiplicity $a_i$, for $i=1,\ldots,s$. For example, the multiset $\{0,1,1,2,2,2\}$ could be written as $\{0^1,1^2,2^3\}$. Hence, if $\alpha\subseteq \Omega$ with $\iota(\alpha)=\{b_1^{a_1},\ldots,b_s^{a_s}\}$ then, since $a$ is the number of blocks in $\B$, we have that $a_1+\cdots+a_s=a$. Moreover, $a_1 b_1 + \cdots + a_s b_s = |\alpha|$ and $\iota(\overline{\alpha})=\{(b-b_1)^{a_1},\ldots,(b-b_s)^{a_s}\}$, where $\overline{\alpha}$ is the complement of $\alpha$ in $\Omega$.


 \begin{definition}\label{def-imprimitiveExamples}
  Let $\varGamma=K(2k+1,k)=O_{k+1}$ and let $\B=\{B_1,\ldots,B_a\}$ be a partition of  $\Omega$ where $|B_i|=b$ for each $i=1,\ldots,a$. For a vertex $\alpha$ let $\iota(\alpha)$ be the multiset $ \{\alpha\cap B_1,\ldots,\alpha\cap B_a\}$. For a multiset $M$, define
  \[
   C_{{\rm imp}}(a,b;M)=\{\alpha\in V(\varGamma)\mid \iota(\alpha)=M\}.
  \]
 \end{definition}

\begin{lemma}\label{lem-imprimitiveExamples}
 Let $a,b,\B$ and $C=C_{{\rm imp}}(a,b;M)$ be as in Definition~\ref{def-imprimitiveExamples}, so that $C$ is a code in $\varGamma=O_{k+1}=K(2k+1,k)$, and suppose one of the following holds:
 \begin{enumerate}[{\rm (1)}]
  \item $M=\left\{((b-1)/2)^{(a+1)/2},((b+1)/2)^{(a-1)/2}\right\}$,
  \item $M=\left\{0^{(a-1)/2},(b-1)/2,b^{(a-1)/2}\right\}$, or,
  \item $M=\left\{b^{a_0},b_1^{a_1}\right\}$, for some integers $a_0,a_1,b_1\geq 0$, where $a_0 b+a_1 b_1=k$ and $(a_0+a_1)b=2k+1$.
 \end{enumerate}
 Then $C$ is $G$-neighbour-transitive, where $G\cong \s_b\wr\s_a$ is the largest subgroup of $\Sym(\Omega)$ preserving the partition $\B$.
\end{lemma}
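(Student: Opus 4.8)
The plan is to verify the two conditions of Definition~\ref{sneighbourtransdef} defining $G$-neighbour-transitivity: that $G$ acts transitively on $C$ and on $C_1$. Two general observations carry most of the weight. First, since $G$ preserves the partition $\B$ setwise, the map $\iota$ is $G$-invariant, so every element of $G$ sends a vertex of type $M$ to a vertex of type $M$; hence $G\leq\Aut(C)$. Second, $G$ acts transitively on the set of all vertices of any fixed type: given $k$-subsets $\alpha,\beta$ with $\iota(\alpha)=\iota(\beta)$, equality of the multisets yields a bijection $\pi\in\s_a$ of the block indices with $|\alpha\cap B_i|=|\beta\cap B_{\pi(i)}|$ for all $i$, and for each $i$ one chooses a bijection $B_i\to B_{\pi(i)}$ carrying $\alpha\cap B_i$ onto $\beta\cap B_{\pi(i)}$; assembling these gives an element of $G\cong\s_b\wr\s_a$ mapping $\alpha$ to $\beta$. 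Applied to type $M$, this shows $G$ is transitive on $C$, and it reduces the remaining task to proving that $C_1$ is exactly the set of all vertices of a single type.

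To identify $C_1$, I would fix $\alpha\in C$ and use that in the odd graph the neighbours of $\alpha$ are precisely the sets $\overline{\alpha}\setminus\{x\}$ for $x\in\overline{\alpha}$, since $|\overline{\alpha}|=k+1$. Writing $m_i=|\alpha\cap B_i|$, the type of a neighbour is obtained from $\iota(\overline{\alpha})=\{(b-m_1),\ldots,(b-m_a)\}$ by decreasing exactly one positive entry by $1$, namely the one indexed by the block containing $x$. In each case I would write $\iota(\overline{\alpha})$ explicitly and enumerate the finitely many neighbour-types according to the value $b-m_j$ of the block a point is removed from. In cases (1) and (2) exactly one value of $b-m_j$ reproduces type $M$ (so those neighbours lie in $C$, whence $\delta=1$), while every other removable block yields one and the same type $M_1$. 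In case (3), where $\iota(\overline{\alpha})=\{0^{a_0},(b-b_1)^{a_1}\}$ has a single positive value, no neighbour has type $M$ (so $\delta\geq2$) and again all neighbours share a single type $M_1$. This gives $C_1\subseteq\{\gamma\mid\iota(\gamma)=M_1\}$.

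For the reverse inclusion I would take an arbitrary vertex $\beta$ with $\iota(\beta)=M_1$, note that $\beta\notin C$ since $M_1\neq M$, and exhibit a codeword adjacent to it: computing $\iota(\overline{\beta})$ and removing a point from the single block whose complement-intersection exceeds the corresponding value in $M$ produces a vertex of type $M$, which is therefore in $C$ and adjacent to $\beta$, so $\beta\in C_1$. Together with the first inclusion this yields $C_1=\{\gamma\mid\iota(\gamma)=M_1\}$, a single $G$-orbit by the transitivity-on-types observation, so $G$ is transitive on $C_1$ and the proof is complete.

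The routine but error-prone part, and the main obstacle, is the multiset bookkeeping: one must track the exact multiplicities in $\iota(\alpha)$, $\iota(\overline{\alpha})$, $\iota(\beta)$ and $\iota(\overline{\beta})$ through the decrement operation in all three cases, and confirm that the non-codeword neighbours genuinely collapse to a single type $M_1$ rather than several. The facts that $ab=2k+1$ forces both $a$ and $b$ to be odd, and that the assumption $a,b\geq2$ then forces $a,b\geq3$, are exactly what guarantee that the relevant multiplicities (such as $(a-1)/2$) are positive and that values like $(b-1)/2$ are at least $1$; this ensures the blocks required in each step actually exist and that no degenerate coincidence of types occurs.
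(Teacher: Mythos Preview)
Your proposal is correct and follows essentially the same approach as the paper: establish that $\iota$ is $G$-invariant, observe that $G$ is transitive on each type, and then in each case compute $\iota(\overline{\alpha})$ and verify that the neighbours of a codeword fall into at most two types, one being $M$ and the other a single type $M_1$. The only difference is in handling the reverse inclusion $\{\gamma\mid\iota(\gamma)=M_1\}\subseteq C_1$: you construct an explicit codeword neighbour of each type-$M_1$ vertex, whereas the paper simply notes that since $G\leq\Aut(C)$, the set $C_1$ is $G$-invariant, hence a nonempty $G$-invariant subset of a single $G$-orbit must equal that orbit---this is slightly slicker and avoids the extra complement computation.
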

 
\begin{proof}
  Since $G$ preserves the partition $\B$, and $\iota$ is defined in terms of $\B$, it follows that $\iota(\alpha^g)=\iota(\alpha)$ for all $\alpha\in V(\varGamma)$ and $g\in G$. Moreover, since $G$ is $a$-transitive on the set $\{B_1,\ldots,B_a\}$ of blocks and independently $b$-transitive on each block $B_i$, for $i=1,\ldots,a$, it follows that $G$ acts transitively on the set of all vertices of a fixed type. In particular, $G$ is a subgroup of $\Aut(C)$ and $G$ acts transitively on $C$.
  
  Suppose case (1) holds, that is, $M=\{((b-1)/2)^{(a+1)/2},((b+1)/2)^{(a-1)/2}\}$. Let $\alpha\in C$. Then the complement $\overline{\alpha}$ has type $\{((b-1)/2)^{(a-1)/2},((b+1)/2)^{(a+1)/2}\}$. If $\beta\in\varGamma_1(\alpha)$ then $\beta$ is obtained from $\overline{\alpha}$ by removing one of its elements. Thus, either $\beta$ has type $\{((b-1)/2)^{(a+1)/2},((b+1)/2)^{(a-1)/2}\}$ and is in $C$, or $\beta$ has type $\{(b-3)/2,((b-1)/2)^{(a-3)/2},((b+1)/2)^{(a+1)/2}\}$ and is in $C_1$. Since $G$ acts transitively on the set of all vertices of a given type, and $G\leq\Aut(C)$, it follows that $C_1$ is the set of all vertices of type $\{(b-3)/2,((b-1)/2)^{(a-3)/2},((b+1)/2)^{(a+1)/2}\}$ and $C$ is $G$-neighbour-transitive.
  
  Suppose case (2) holds. Let $\alpha\in C$. Then the complement $\overline{\alpha}$ has type $\{0^{(a-1)/2},(b+1)/2,b^{(a-1)/2}\}$. Then $\beta\in\varGamma_1(\alpha)$ is obtained from $\overline{\alpha}$ by removing one of its elements, and hence $\beta$ either has type $\{0^{(a-1)/2},(b-1)/2,b^{(a-1)/2}\}$ and is in $C$, or has type $\{0^{(a-1)/2},(b+1)/2,b-1,b^{(a-3)/2}\}$ and is in $C_1$. Since $G$ acts transitively on the set of all vertices of a given type, and $G\leq\Aut(C)$, it follows that $C_1$ is the set of all vertices of type $\{0^{(a-1)/2},(b+1)/2,b-1,b^{(a-3)/2}\}$ and $C$ is $G$-neighbour-transitive.
  
  Suppose case (3) holds. Here, if $\alpha\in C$ and $\beta\in\varGamma_1(\alpha)$ then $\alpha$ has type $\{b^{a_0},b_1^{a_1}\}$, $\overline{\alpha}$ has type $\{0^{a_0},(b-b_1)^{a_1}\}$ and hence $\beta$ has type $\{0^{a_0},b-b_1-1,(b-b_1)^{a_1-1}\}\neq M$, which implies that $\beta\in C_1$. Again, since $G$ acts transitively on the set of all vertices of a given type, and $G\leq\Aut(C)$, it follows that $C_1$ is the set of all vertices of type $\{0^{a_0},b-b_1-1,(b-b_1)^{a_1-1}\}$ and $C$ is $G$-neighbour-transitive.
\end{proof}


\begin{lemma}\label{imprimitiveOddLemma1}
 Let $C$ be a non-trivial neighbour-transitive code in $K(2k+1,k)=O_{k+1}$ with minimum distance $\delta\geq2$ and suppose that $\Aut(C)$ acts transitively but imprimitively on $\Omega$ with system of imprimitivity $\{B_1,\ldots,B_a\}$, where $|B_i|=b$ for each $i$ and $ab=2k+1$ with $a,b\geq 2$. Moreover, for each $\alpha\subseteq \Omega$ let $\iota(\alpha)$ be the multiset $\{|\alpha\cap B_i|\mid i=1,\ldots,a\}$. Then there exist integers $a_0,a_1$ and $b_1$ such that $\iota(\alpha)=\{b^{a_0},b_1^{a_1}\}$ for all $\alpha\in C$.
\end{lemma}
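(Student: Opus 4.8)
The plan is to exploit the $\Aut(C)$-invariance of $\iota$ together with the fact that, in an odd graph, every neighbour of a codeword must share a single common type once $\delta\geq 2$. Since $\Aut(C)$ preserves the block system $\{B_1,\ldots,B_a\}$, the map $\iota$ is $\Aut(C)$-invariant, so Lemma~\ref{invlemma}(1) provides a single type $M$ common to all codewords. I would fix a codeword $\alpha\in C$ and record $\iota(\overline{\alpha})=\{n_1,\ldots,n_a\}$, where $n_i=b-|\alpha\cap B_i|$.

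Next I would describe $\varGamma_1(\alpha)$ concretely. In $O_{k+1}$ adjacency means disjointness, and since $|\overline{\alpha}|=(2k+1)-k=k+1$, every neighbour of $\alpha$ has the form $\overline{\alpha}\setminus\{x\}$ for some $x\in\overline{\alpha}$. Removing $x$ from the block $B_j$ containing it decreases the corresponding entry of $\iota(\overline{\alpha})$ by one, and such an $x$ exists precisely when $n_j\geq 1$. Because $\delta\geq 2$, each of these $k+1$ neighbours is a non-codeword and hence lies in $C_1$, so Lemma~\ref{invlemma}(3) forces them all to have the same type; equivalently, decrementing any nonzero entry of $\iota(\overline{\alpha})$ yields one and the same multiset.

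The crux is then a short combinatorial observation about multisets, and I expect this to be the main (indeed the only) subtle point. Let $v$ be the largest value occurring among the $n_i$, and suppose for contradiction that a second distinct nonzero value $w$ with $1\leq w<v$ also occurs. Decrementing a copy of $v$ strictly lowers the multiplicity of $v$, whereas decrementing a copy of $w$ leaves the multiplicity of $v$ unchanged, since neither $w$ nor $w-1$ equals $v$. The two resulting multisets therefore differ in the multiplicity of their largest element $v$, contradicting the conclusion of the previous paragraph. Hence at most one nonzero value occurs among the $n_i$.

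Finally I would translate back. Since $\overline{\alpha}$ is non-empty, at least one $n_i$ is positive, so $\iota(\overline{\alpha})=\{0^{a_0},c^{a_1}\}$ for a unique $c\geq 1$ and some $a_0\geq 0$, $a_1\geq 1$ with $a_0+a_1=a$. The zero entries correspond to blocks with $|\alpha\cap B_i|=b$ and the entries equal to $c$ correspond to $|\alpha\cap B_i|=b-c$. Setting $b_1=b-c$ gives $\iota(\alpha)=\{b^{a_0},b_1^{a_1}\}$, and since all codewords share the common type $M$, this holds for every $\alpha\in C$, as required.
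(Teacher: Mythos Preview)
Your proof is correct and follows essentially the same route as the paper: both pass to $\iota(\overline{\alpha})$, observe that neighbours in $O_{k+1}$ arise by deleting a single element of $\overline{\alpha}$, and use Lemma~\ref{invlemma} with $\delta\geq 2$ to force all such decrements to yield one common type. Your maximum-value argument for the multiset step is a little more explicit than the paper's assertion that there are ``$s$ or $s-1$ types''; note also that the paper goes on to exclude the possibility $a_0=0$ via a divisibility argument ($a\mid k$ and $a\mid k+1$), although the lemma as stated does not require this.
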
 

\begin{proof}
	For $\alpha\in C$ and $\beta\in \Gamma_1(\alpha)$, we have that $\beta=\overline{\alpha}\setminus\{u\}$, where $\overline{\alpha}$ is the complement of $\alpha$ in $\Omega$ and $u$ is some element from $\overline{\alpha}$. 
	If $\iota(\alpha)=\{b_1^{a_1},\ldots , b_s^{a_s}\}$, then
	$\iota(\overline{\alpha})$ is of the form 
	$\{(b-b_1)^{a_1},\ldots , (b-b_s)^{a_s}\}$, so $\iota(\beta)= \{ (b-b_1)^{a_1},\ldots , (b-b_i)^{a_i-1},(b-b_i-1), \ldots , (b-b_s)^{a_s} \}$, for some $i\in\{1,\ldots ,s\}$.  This implies that there are $s$ different types for a neighbour $\beta$ of $\alpha$, unless $b=b_j$ for some $j\in\{1,\ldots ,s\}$, in which case there are $s-1$ types. 
	Since Lemma \ref{invlemma} ensures that all elements of $\Gamma_1(\alpha)$ have the same type, it follows that $s=1$ or $s-1=1$ (i.e. $s=2$). Therefore $\iota(\overline{\alpha})=\{0^{a_0},(b-b_1)^{a_1}\}$, with $a_0\geq 0$. Then $\iota(\alpha)$ is either equal to $\{b^{a_0},b_1^{a_1}\}$ or to $\{b_1^{a}\}$. 
	Suppose $\iota(\alpha)= \{b_1^{a}\}$. Then $\iota(\overline{\alpha})=\{(b-b_1)^a \}$. This implies $ab_1=|\alpha|=k$ and $a(b-b_1)=|\overline{\alpha}|=k+1$. Hence $a$ divides both $k$ and $k+1$, that is $a=1$, a contradiction.
	We conclude that $\iota(\alpha)=\{b^{a_0},b_1^{a_1}\}$.
\end{proof}


\begin{lemma}\label{imprimitiveOddLemma2}
 Let $C$ be a non-trivial neighbour-transitive code in $K(2k+1,k)=O_{k+1}$ with minimum distance $\delta=1$ and suppose that $\Aut(C)$ acts transitively but imprimitively on $\Omega$ with system of imprimitivity $\{B_1,\ldots,B_a\}$, where $|B_i|=b$ for each $i$ and $ab=2k+1$ with $a,b\geq 2$. Moreover, for each $\alpha\subseteq \Omega$ let $\iota(\alpha)$ be the multiset $\{|\alpha\cap B_i|\mid i=1,\ldots,a\}$. Then one of the following holds.
 \begin{enumerate}[{\rm (1)}]
  \item $\iota(\alpha)=\left\{\left(\frac{b-1}{2}\right)^{(a+1)/2},\left(\frac{b+1}{2}\right)^{(a-1)/2}\right\}$ for all $\alpha\in C$.
  \item $\iota(\alpha)=\left\{0^{(a-1)/2},\frac{b-1}{2},b^{(a-1)/2}\right\}$ for all $\alpha\in C$.
 \end{enumerate}
\end{lemma}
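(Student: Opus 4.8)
The plan is to follow the strategy of Lemma~\ref{imprimitiveOddLemma1}, but now extract the extra leverage provided by $\delta=1$: transitivity of $\Aut(C)$ on $C$ together with $\delta=1$ means every codeword has a \emph{codeword} among its neighbours. By Lemma~\ref{invlemma}~(1) all codewords share one type, so I would fix $\iota(\alpha)=M=\{b_1^{a_1},\ldots,b_s^{a_s}\}$ with $0\leq b_1<\cdots<b_s\leq b$ and $a_1+\cdots+a_s=a$ for every $\alpha\in C$. As in Lemma~\ref{imprimitiveOddLemma1}, a neighbour $\beta\in\varGamma_1(\alpha)$ is obtained by deleting one element of $\overline{\alpha}$, whose type is $\{(b-b_1)^{a_1},\ldots,(b-b_s)^{a_s}\}$, and deleting an element from a block meeting $\overline{\alpha}$ in $b-b_i>0$ points decrements that entry. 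Tracking, for each threshold $v$, the number of blocks meeting $\overline{\alpha}$ in at least $v$ points, one sees that such a deletion changes exactly the count at threshold $b-b_i$; since the values $b-b_i$ are distinct, deletions from distinct positive-value groups yield distinct types. Hence the number of neighbour-types is exactly the number of $b_i<b$, namely $s$ if no $b_i$ equals $b$ and $s-1$ otherwise.

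Next I would invoke Lemma~\ref{invlemma}~(2): since $\delta=1$, every vertex of $\varGamma_1(\alpha)$ has type $M$ or the single type carried by the orbit $C_1$, so there are at most two neighbour-types, and moreover $M$ itself must be one of them (the codeword-neighbour). Combining the bound with the count forces $s\leq 3$, with the extra requirement that some $b_i=b$ (a full block) whenever $s=3$. The case $s=1$ is excluded exactly as in Lemma~\ref{imprimitiveOddLemma1}, since $ab_1=k$ and $a(b-b_1)=k+1$ give $a\mid 1$, contradicting $a\geq 2$. The decisive constraint is then that some deletion reproduces $M$; equivalently, $\iota(\overline{\alpha})$ equals $M$ with a single entry increased by $1$, while also $\iota(\overline{\alpha})=\{b-x:x\in M\}$. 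This self-referential condition on the multiplicities is what I would solve directly in the two remaining cases $s=2$ and $s=3$.

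For $s=2$ I would split on whether $b_2=b$. If $b_2=b$ then $\overline{\alpha}$ has the single positive value $b-b_1$, giving one neighbour-type; since $M$ contains $b$, a codeword-neighbour would require $b_1=0$, but then the unique neighbour-type contains the value $b-1\notin\{0,b\}$ (as $b\geq 3$ is odd), so it cannot equal $M$ and this subcase is ruled out. If instead $b_1<b_2<b$ there are exactly two neighbour-types, and comparing multiplicities of the largest and smallest values shows the only consistent matching is the one in which the newly created value merges with an existing one; solving the resulting equations pins down $b_1=(b-1)/2$, $b_2=(b+1)/2$, $a_1=(a+1)/2$, $a_2=(a-1)/2$, which is conclusion~(1). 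For $s=3$ the type $M$ has three distinct values, so its maximal value $b_s$ must be reproduced by a deletion, forcing $b_s=b$ and $b_1=0$; writing $M=\{0^{a_1},b_2^{a_2},b^{a_3}\}$ and imposing that a deletion returns $M$, the rigid multiplicities of the extreme values $0$ and $b$ eliminate one of the two possible deletions, while the other forces $a_2=1$, $b_2=(b-1)/2$ and $a_1=a_3=(a-1)/2$, which is conclusion~(2).

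The main obstacle is this final matching step: in each case several a priori matchings of $\{b-x:x\in M\}$ (with one decremented entry) against $M$ must be shown inconsistent before the unique surviving solution emerges, and one must handle the possibility that the decremented entry coincides with, or merges into, a reflected value. I expect the cleanest route is to compare the multiplicities of the largest and smallest entries first, as these are rigid and immediately kill the spurious matchings, and only afterwards solve for the middle value; the computations use that $b$ is odd (so $(b\pm1)/2\in\Z$) and $a$ is odd (so $(a\pm1)/2\in\Z$), both consequences of $ab=2k+1$, and a final check confirms that the identity $\sum_i a_i b_i=k$ holds automatically for the two solutions.
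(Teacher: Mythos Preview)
Your plan is correct and follows essentially the same route as the paper: use the $\Aut(C)$-invariant multiset $\iota$, count the distinct neighbour-types of a codeword (namely $s$ or $s-1$ according to whether some $b_i=b$), invoke Lemma~\ref{invlemma} to bound this by two with $M$ among them, and then solve the resulting multiset equations in the cases $s=2$ and $s=3$. The only organisational difference is that the paper first disposes of the degenerate situation where \emph{every} neighbour has type $M$ (reaching $b=2k+1$), whereas you fold this into your case split; your handling of the subcase $s=2$, $b_2=b$ plays the same role. One small point worth making explicit when you write it up: your use of Lemma~\ref{invlemma}(2) needs $C_1\neq\varnothing$, which follows since $C=V\varGamma$ would force $\Aut(C)=\Sym(\Omega)$ to be primitive on $\Omega$, contrary to hypothesis.
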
 

\begin{proof}
	If $\delta=1$, then for $\alpha\in C$ there exists $\beta\in\Gamma_1(\alpha)$ such that $\iota(\alpha)=\iota(\beta)$, i.e. a neighbour $\beta$ of $\alpha$ such that $\beta \in C$. 

	Suppose that $\iota(\beta)=\iota(\alpha)$, for each $\beta\in\Gamma_1(\alpha)$. 
	If $\iota(\alpha)=\{b_1^{a_1},\ldots , b_s^{a_s}\}$, then the complement $\overline{\alpha}$ has type
	$\{(b-b_1)^{a_1},\ldots , (b-b_s)^{a_s}\}$.  Element $\beta\in\varGamma_1(\alpha)$ is obtained from $\overline{\alpha}$ by removing one of its elements, and hence $\beta$ has type $\iota(\beta)= \{(b-b_1)^{a_1},\ldots , (b-b_i)^{a_i-1},(b-b_i-1), \ldots , (b-b_s)^{a_s} \}$, for some $i\in\{1,\ldots ,s\}$. This implies that there are $s$ different types for a neighbour $\beta$ of $\alpha$, unless $b=b_j$ for some $j\in\{1,\ldots ,s\}$, in which case there are $s-1$ types. Since we assumed that all neighbours of $\alpha$ are of the same type as $\alpha$, it follows that $s=1$ or $s=2$ and
	$\iota(\overline{\alpha})= \{(b-b_1)^{a_1},0^{a_2}\}$.  Therefore $\iota(\beta)= \{ (b-b_1)^{a_1-1},(b-b_1-1) ,0^{a_2} \} =\{ b_1^{a_1}, 0^{a_2}\}$. Equality of the last two multisets implies that $a_1=1$ and $b-b_1-1=b_1$, so $b_1=(b-1)/2$. Moreover, then it holds $b_1=a_1b_1=k$. In this case we obtain $b=2k+1$, a contradiction.
	
	It follows that there exist $\beta_1,\beta_2\in\Gamma_1(\alpha)$ such that $\iota(\beta_1)=\iota(\alpha)\neq\iota(\beta_2)$.
	As before, if $\iota(\alpha)=\{b_1^{a_1},\ldots , b_s^{a_s}\}$, then 
	$\iota(\overline{\alpha}) =\{(b-b_1)^{a_1},\ldots , (b-b_s)^{a_s}\}$, so any $\beta\in\varGamma_1(\alpha)$ has type $\iota(\beta)= \{(b-b_1)^{a_1},\ldots , (b-b_i)^{a_i-1},(b-b_i-1), \ldots , (b-b_s)^{a_s} \}$, for some $i\in\{1,\ldots ,s\}$. Then there are $s$ different types for a neighbour $\beta$ of $\alpha$, unless $b=b_j$ for some $j\in\{1,\ldots ,s\}$, in which case there are $s-1$ types. Since by Lemma \ref{invlemma} 
	every vertex from $\varGamma_1(\alpha)$ is the same type as $\alpha$ or $\beta_2$, it follows that $s=2$ or $s=3$.	
	Therefore, we obtain $\iota(\overline{\alpha})=\{ (b-b_1)^{a_1},(b-b_2)^{a_2}, 0^{a_3} \}$. Depending on the value of $a_3$ we have the following two cases.
	\begin{itemize}
		\item[i)] If $a_3=0$ we have $\iota(\alpha)=\{b_1^{a_1}, b_2^{a_2}\}$, so $\iota(\beta)=\{ (b-b_1)^{a_1-1}, b-b_1-1, (b-b_2)^{a_2} \}$ or $\iota(\beta)=\{ (b-b_1)^{a_1}, (b-b_2)^{a_2-1}, b-b_2-1 \}$, for $\beta\in\Gamma_1(\alpha)$.
 		We can assume that:
		$$\iota(\beta_1)=\{ (b-b_1)^{a_1-1}, b-b_1-1, (b-b_2)^{a_2} \}, \ \iota(\beta_2)= \{ (b-b_1)^{a_1}, (b-b_2)^{a_2-1}, b-b_2-1 \}.$$		
		Note that $b_2\neq b-b_2$, since otherwise $2b_2=b$, but $b$ must be odd. Hence, $b-b_2=b_1$, because $\iota(\beta_1)=\iota(\alpha)$. Also, $b-b_1-1=b-b_2$ so $b_2=b_1+1$. This tells us that $a_2+1=a_1$. Since $a_1+a_2=a$ and $b_1+b_2=b$ in this case, we obtain: $a_1=(a+1)/2$, $a_2=(a-1)/2$, $b_1=(b-1)/2$, $b_2=(b+1)/2$. Therefore, $\iota(\alpha)= \{ \left( \frac{b-1}{2}\right)^{\frac{a+1}{2}}, \left(  \frac{b+1}{2}\right)^{\frac{a-1}{2}} \}$.		
		
		\item[ii)] Let $a_3\geq1$. We can assume
		$\iota(\beta_1)=\{ (b-b_1)^{a_1-1}, b-b_1-1, (b-b_2)^{a_2}, 0^{a_3} \}$ and $\iota(\beta_2)= \{ (b-b_1)^{a_1}, (b-b_2)^{a_2-1}, b-b_2-1, 0^{a_3} \}$.	We have $\iota(\alpha)=\{ b_1^{a_1},b_2^{a_2},b^{a_3} \}$ and $\iota(\alpha)=\iota(\beta_1)$. First, $b_2\neq b-b_2$, as before. Either $b_1$ or $b_2$ must be equal to $0$.
	
	If $b_1=0$, then $\iota(\alpha)=\{ 0^{a_1},b_2^{a_2},b^{a_3} \}$ and $\iota(\beta_1)=\{ b^{a_1-1}, b-1, (b-b_2)^{a_2}, 0^{a_3} \}$. This implies that $a_1-1=a_3$ and $b-1=b-b_2=b_2$, a contradiction. 

	If $b_2=0$, then $\iota(\alpha)=\{ 0^{a_2},b_1^{a_1},b^{a_3} \}$ and $\iota(\beta_1)=\{ (b-b_1)^{a_1-1}, b-b_1-1, b^{a_2}, 0^{a_3} \}$. It follows that $a_2=a_3$ and $b_1=b-b_1-1$, so $b_1=(b-1)/2$. If $a_1\geq2$, then $b_1=b-b_1=b-b_1-1$, a contradiction. Therefore, $a_1=1$. From $a_1+a_2+a_3=a$, we obtain $a_2=a_3=(a-1)/2$, hence $\iota(\alpha)= \{ 0^{(a-1)/2},(b-1)/2,b^{(a-1)/2} \}$. 
\end{itemize}
This completes the proof.
\end{proof}

We may now prove Theorem~\ref{imprimitiveOddThm}.

\begin{proof}
 Theorem~\ref{imprimitiveOddThm} is proved by Lemma~\ref{imprimitiveOddLemma1}, which considers the case where $\delta\geq 2$, and Lemma~\ref{imprimitiveOddLemma2}, which deals with the case where $\delta=1$, as well as the examples provided in Lemma~\ref{lem-imprimitiveExamples}.
\end{proof}

\section{Transitive action on \texorpdfstring{$\Omega$}{Omega}}\label{transitiveSection}


We first give an example. Note that the example below has minimum distance $1$, but has a primitive (and hence transitive) group acting on the underlying set $\Omega$ (see Lemma~\ref{lem-primitiveExample}).

\begin{example}\label{halfLinesinPlaneEx}
 Let $V=\langle e_1,e_2,e_3\rangle\cong\F_3^3$ be the underlying vector space of $\pg_2(3)$. Let $\varGamma=O_7=K(13,6)$ with underlying set $\Omega$ being the point set of $\pg_2(3)$. Define the code $C$, in $\varGamma$, to be the set of all vertices $\alpha$ such that $\alpha$ is the symmetric difference $\ell_1\ominus\ell_2$ for some distinct pair of lines $\ell_1,\ell_2$ of $\pg_2(3)$.
\end{example}

\begin{lemma}\label{lem-primitiveExample}
 Let $C$ be as in Example~\ref{halfLinesinPlaneEx}. Then $C$ is neighbour-transitive, with $|C|=78$ minimum distance $\delta=1$ and $\Aut(C)=\PGaL_3(3)$.
\end{lemma}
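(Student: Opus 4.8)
The plan is to first record the relevant facts about $\pg_2(3)$, then isolate a purely combinatorial description of its line structure inside $C$, and finally read off the group and transitivity statements. Recall that $\pg_2(3)$ has $13$ points and $13$ lines, each line has $4$ points, and two distinct lines meet in exactly one point; hence for distinct lines $\ell_1,\ell_2$ with $p=\ell_1\cap\ell_2$ one has $|\ell_1\ominus\ell_2|=4+4-2=6$, so each codeword is indeed a vertex of $O_7=K(13,6)$. I would also note at the outset that, since $\F_3$ has no non-trivial field automorphism, $\PGaL_3(3)=\PGL_3(3)$, and that this is the full collineation group of $\pg_2(3)$; in particular every collineation permutes the lines and so maps symmetric differences of line-pairs to symmetric differences of line-pairs, which gives the easy inclusion $\PGaL_3(3)\leq\Aut(C)$.

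The combinatorial heart is to recover the lines of $\pg_2(3)$ from $C$ alone. First I would show that for a codeword $\alpha=\ell_1\ominus\ell_2$ the only lines meeting $\alpha$ in three points are $\ell_1$ and $\ell_2$: a line through $p$ other than $\ell_1,\ell_2$ misses $\alpha$, while a line not through $p$ meets each of $\ell_1,\ell_2$ in one point of $\alpha$ and in no further point of $\alpha$, hence meets $\alpha$ in exactly two points. Consequently $\alpha$ decomposes uniquely into two collinear triples, the pair $\{\ell_1,\ell_2\}$ is determined by $\alpha$, and the map $\{\ell_1,\ell_2\}\mapsto\ell_1\ominus\ell_2$ is injective, giving $|C|=\binom{13}{2}=78$. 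I would then upgrade this to an intrinsic test for collinearity: a collinear triple lies in exactly three codewords (namely $\ell\ominus\ell'$ where $\ell$ is its line and $\ell'$ runs over the three other lines through its missing fourth point), whereas a non-collinear triple, being forced to split $2+1$ across the two lines of any codeword containing it, lies in exactly six. Since these counts differ, every $g\in\Aut(C)$ preserves collinearity of triples, hence preserves lines (each line is the union of two collinear triples sharing two points, so its image is collinear of size four, thus a line), hence is a collineation. This yields $\Aut(C)\leq\PGaL_3(3)$, and therefore $\Aut(C)=\PGaL_3(3)$.

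For the remaining assertions I would first observe that $\PGL_3(3)$ is $2$-transitive on the lines of $\pg_2(3)$ (by $2$-transitivity on points together with the self-duality of the plane), hence transitive on unordered pairs of distinct lines, and therefore transitive on $C$. The key structural observation for $\delta$ and $C_1$ is that $\overline{\alpha}=m_1\cup m_2$, where $m_1,m_2$ are the two lines through $p$ other than $\ell_1,\ell_2$; thus $\alpha=\ell_1\ominus\ell_2$ and $m_1\ominus m_2=\overline{\alpha}\setminus\{p\}$ are disjoint codewords, giving $\delta=1$. The seven neighbours of $\alpha$ are the sets $\overline{\alpha}\setminus\{q\}$ for $q\in\overline{\alpha}$: for $q=p$ this is the codeword $m_1\ominus m_2$, while for each of the six points $q\in(m_1\cup m_2)\setminus\{p\}$ the resulting set contains a full line and so (since, by the same computation as above, no codeword contains four collinear points) lies in $C_1$. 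Using transitivity on $C$, it then suffices to show the stabiliser $\PGL_3(3)_{\{\ell_1,\ell_2\}}$ is transitive on these six neighbours, equivalently on the six points off $\ell_1\cup\ell_2$; a short coordinate computation with the (essentially lower-triangular) stabiliser confirms this, completing the proof of neighbour-transitivity.

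I expect the main obstacle to be the exact determination of $\Aut(C)$, specifically the inclusion $\Aut(C)\leq\PGaL_3(3)$: one must certify that a merely combinatorial automorphism of $C$ respects the geometry of the plane. The triple-containment count ($3$ versus $6$) is precisely what makes this work, characterising collinearity intrinsically in terms of $C$. The transitivity on $C_1$ is then a secondary but genuine step, resting on the transitivity of the line-pair stabiliser on the six off-line points.
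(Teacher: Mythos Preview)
Your argument is correct and, for $|C|$, $\delta$, and neighbour-transitivity, essentially parallels the paper's: both identify $\overline{\alpha}$ as the union of the remaining two lines $m_1,m_2$ through $p=\ell_1\cap\ell_2$, obtain $\delta=1$ from the disjoint pair $\ell_1\ominus\ell_2$ and $m_1\ominus m_2$, and deduce transitivity on $C_1$ from transitivity of a suitable stabiliser on the six points of $\overline{\alpha}\setminus\{p\}$. Your choice of the setwise stabiliser of $\{\ell_1,\ell_2\}$ (equivalently, $\Aut(C)_\alpha$) is in fact the more accurate one: the paper invokes the stabiliser of $p$, which does not fix $\alpha$, so strictly speaking one must pass to the subgroup fixing $\{\ell_1,\ell_2\}$ before concluding transitivity on $\varGamma_1(\alpha)\cap C_1$.

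The one place where your route genuinely differs is the determination of $\Aut(C)$. The paper simply asserts that $\Aut(C)=\PGaL_3(3)$ because the latter is the full collineation group, without arguing the reverse inclusion $\Aut(C)\leq\PGaL_3(3)$. You instead give an intrinsic characterisation of collinearity inside $C$ via the $3$-versus-$6$ codeword count on triples, from which every element of $\Aut(C)$ must preserve lines and hence be a collineation. This is a clean and self-contained addition; it is not strictly needed for the later results of the paper (neighbour-transitivity only requires the inclusion $\PGaL_3(3)\leq\Aut(C)$), but it fully justifies the equality claimed in the lemma.
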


\begin{proof}
 Since $\PGaL_3(3)$ is the full collineation group of $\pg_2(3)$ it follows that $\Aut(C)=\PGaL_3(3)$ and, since $\PGaL_3(3)$ acts $2$-transitively on lines, we have that $\Aut(C)$ acts transitively on $C$. There are ${13\choose 2}=78$ pairs of lines, so that $|C|=78$. There are four lines of $\pg_2(3)$ through any given point $p$, say $\ell_1,\ell_2,\ell_3,\ell_4$, and $\ell_1\ominus\ell_2$ is disjoint from $\ell_3\ominus\ell_4$, both of which are in $C$. Hence $\delta=1$. If $\alpha=\ell_1\ominus\ell_2$ then $\varGamma_1(\alpha)$ contains $\ell_3\ominus\ell_4$ and $(\ell_3\cup\ell_4)\setminus p_1$ for some $p_1\in\ell_3\ominus\ell_4$, the latter being in $C_1$. The stabiliser in $\PGL_3(2)$ of $\ell_3\cap\ell_4$ acts transitively on the points of $\ell_3\ominus\ell_4$, and hence transitively on $\varGamma_1(\alpha)\cap C_1$. Thus $C$ is neighbour-transitive, completing the proof.
\end{proof}

\begin{lemma}\label{homogeneouslemma}
 Let $C$ be a neighbour-transitive code in $K(n,k)$ with minimum distance $\delta\geq 3$ and let $\alpha\in C$. Then $\Aut(C)_\alpha$ acts transitively on $\varGamma_1(\alpha)$ and acts $(n-2k)$-homogeneously on $\Omega\setminus\alpha$. 
\end{lemma}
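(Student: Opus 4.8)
The plan is to show that both assertions of the lemma reduce to a single fact, namely that $\Aut(C)_\alpha$ acts transitively on the set ${\overline{\alpha}\choose k}$ of all $k$-subsets of $\overline{\alpha}=\Omega\setminus\alpha$. First I would record the elementary identification of the neighbourhood: since adjacency in $K(n,k)$ means disjointness and every vertex is a $k$-subset, the set $\varGamma_1(\alpha)$ consists of exactly the $k$-subsets disjoint from $\alpha$, that is, $\varGamma_1(\alpha)={\overline{\alpha}\choose k}$. Here $|\overline{\alpha}|=n-k\geq k+1$ (as $n\geq 2k+1$), so this set is non-empty and there is genuinely something on which to act.

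The first substantive step is to place $\varGamma_1(\alpha)$ inside $C_1$ and to construct an $\Aut(C)$-equivariant ``nearest codeword'' map. Since $\delta\geq 3>1$, no neighbour of a codeword can itself be a codeword, so every $\beta\in\varGamma_1(\alpha)$, being adjacent to $\alpha\in C$, lies in $C_1$. The essential use of $\delta\geq 3$ (rather than merely $\delta\geq 2$) is to force each $\beta\in C_1$ to be adjacent to a \emph{unique} codeword: if $\beta$ were adjacent to distinct codewords $\alpha_1,\alpha_2$, the triangle inequality would give $d(\alpha_1,\alpha_2)\leq 2<\delta$, contradicting the minimum distance. This yields a well-defined $\Aut(C)$-equivariant map $\pi\colon C_1\to C$ sending each $\beta$ to its unique adjacent codeword, and by construction $\pi^{-1}(\alpha)=\varGamma_1(\alpha)$.

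Next I would obtain transitivity on $\varGamma_1(\alpha)$ by a standard fibre argument. As $C$ is neighbour-transitive, $\Aut(C)$ acts transitively on $C_1$; so given $\beta,\beta'\in\varGamma_1(\alpha)$ there is $g\in\Aut(C)$ with $\beta^g=\beta'$, and equivariance of $\pi$ gives $\alpha^g=\pi(\beta)^g=\pi(\beta^g)=\pi(\beta')=\alpha$, whence $g\in\Aut(C)_\alpha$. Thus $\Aut(C)_\alpha$ is transitive on $\varGamma_1(\alpha)$. To finish, note that $\Aut(C)_\alpha$ fixes $\alpha$ setwise and so acts on $\overline{\alpha}$, a set of size $n-k$; transitivity on $\varGamma_1(\alpha)={\overline{\alpha}\choose k}$ is precisely $k$-homogeneity on $\overline{\alpha}$. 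Since complementation within $\overline{\alpha}$ is an equivariant bijection between the $k$-subsets and the $(n-k)-k=(n-2k)$-subsets of $\overline{\alpha}$, $k$-homogeneity is equivalent to $(n-2k)$-homogeneity, which gives the stated conclusion.

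I expect the only genuinely load-bearing step to be the well-definedness of $\pi$, which is exactly where $\delta\geq 3$ is indispensable; the remaining steps are routine, consisting of the orbit bookkeeping in the fibre argument and the elementary equivalence of $t$- and $(|X|-t)$-homogeneity via complementation.
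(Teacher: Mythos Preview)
Your proposal is correct and follows essentially the same approach as the paper: both arguments use $\delta\geq 3$ to ensure each element of $C_1$ has a unique nearest codeword, deduce from transitivity on $C_1$ that $\Aut(C)_\alpha$ is transitive on $\varGamma_1(\alpha)={\overline{\alpha}\choose k}$, and then pass from $k$-homogeneity to $(n-2k)$-homogeneity by complementation in $\overline{\alpha}$. Your framing via an equivariant nearest-codeword map $\pi$ is slightly more explicit than the paper's phrasing in terms of a partition of $C_1$, but the content is identical.
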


\begin{proof}
 Since $\delta\geq 3$, the set $\{\varGamma_1(\beta)\mid \beta\in C\}$ partitions $C_1$. Moreover, the sets $\varGamma_1(\alpha)$ and $\varGamma_1(\beta)$ are disjoint for distinct $\beta\in C\setminus\{\alpha\}$. As $\Aut(C)$ acts transitively on $C_1$, it follows that for all $\gamma_1,\gamma_2\in\varGamma_1(\alpha)$ there exists a $g\in\Aut(C)$ such that $\gamma_1^g=\gamma_2$. Since $\alpha$ is the unique codeword at distance $1$ from each of $\gamma_1,\gamma_2$, we have that $g\in\Aut(C)_\alpha$. Hence $\Aut(C)_\alpha$ acts transitively on $\varGamma_1(\alpha)$. Now, the neighbours of $\alpha$ are precisely the $k$-subsets of $\Omega$ that are disjoint from $\alpha$, that is,
 \[
  \varGamma_1(\alpha)={\Omega\setminus\alpha \choose k} .
 \]
 Thus, $\Aut(C)_\alpha$ is $k$-homogeneous on $\Omega\setminus\alpha$, which, upon taking complements in $\Omega\setminus\alpha$, is the same as $(n-2k)$-homogeneous, since $|\Omega\setminus\alpha|=n-k$.
\end{proof}

\begin{lemma}\label{allpairsincomplement}
 Let $C$ be a code in $K(n,k)$ and suppose that $\Aut(C)$ acts transitively on $\Omega$. Then for every pair $u,v\in\Omega$ there exists a codeword $\beta\in C$ such that $u,v\in\overline{\beta}$.
\end{lemma}

\begin{proof}
 Define a graph $\Phi$ with vertex set $\Omega$ where distinct $u,v\in\Omega$ are adjacent if there exists an $\beta\in C$ such that $u,v\in\overline{\beta}$. It follows that $\Phi$ is $\Aut(C)$-vertex-transitive. Hence the clique-coclique bound 
 \[
  \alpha(\Phi)\omega(\Phi)\leq n
 \]
 holds. Since the induced subgraph $\Phi[\overline{\beta}]$ is isomorphic to $K_{n-k}$, and $n-k\geq (n+1)/2$, we have that $\alpha(\Phi)=1$. Hence $\Phi$ is the complete graph, and the result holds.
\end{proof}

\begin{proof}[Proof of Theorem~\ref{notodd2hom}]
 Suppose that $n\neq 2k+1$, that is, that $K(n,k)$ is not an odd graph. Let $u_1,v_1,u_2,v_2\in\Omega$ with $u_1\neq v_1$ and $u_2\neq v_2$. Then, by Lemma~\ref{allpairsincomplement}, there exist $\alpha_1,\alpha_2\in C$ such that $u_1,v_1\in\overline{\alpha_1}$ and $u_2,v_2\in\overline{\alpha_2}$. Moreover, since $|\overline{\alpha_1}|=|\overline{\alpha_2}|=n-k$ and $k\leq n/2-1$, it follows that $|\overline{\alpha_1}|\cap|\overline{\alpha_2}|\geq 2$. Hence, there exist $u_3,v_3\in \overline{\alpha_1}\cap\overline{\alpha_2}$. Furthermore, by Lemma~\ref{homogeneouslemma}, there exists a $g_1\in\Aut(C)_{\alpha_1}$ and a $g_2\in\Aut(C)_{\alpha_2}$ such that $\{u_1,v_1\}^{g_1}=\{u_3,v_3\}$ and $\{u_2,v_2\}^{g_2}=\{u_3,v_3\}$. Hence $\{u_1,v_1\}^{g_1 g_2^{-1}}= \{u_2,v_2\}$ and the result follows.
\end{proof}


\begin{thebibliography}{10}

\bibitem{bailey2020classification}
R.~F. Bailey and D.~R. Hawtin.
\newblock On the classification of binary completely transitive codes with
  almost-simple top-group.
\newblock {\em European J. Combin.}, 107:103604, 2023.

\bibitem{bamberg2022codes}
J.~Bamberg, A.~Devillers, M.~Ioppolo, and C.~E. Praeger.
\newblock Codes and designs in {J}ohnson graphs from symplectic actions on
  quadratic forms.
\newblock {\em arXiv preprint arXiv:2202.06237}, 2022.

\bibitem{Biggs1973289}
N.~Biggs.
\newblock Perfect codes in graphs.
\newblock {\em J. Combin. Theory Ser. B}, 15(3):289 -- 296, 1973.

\bibitem{borges2001nonexistence}
J.~Borges, J.~Rif{\`a}, and V.~A. Zinoviev.
\newblock Nonexistence of completely transitive codes with error-correcting
  capability $e> 3$.
\newblock {\em IEEE Trans. Infor. Theory}, 47(4):1619--1621, 2001.

\bibitem{borges2019completely}
J.~{Borges}, J.~{Rif{\`a}}, and V.~A. {Zinoviev}.
\newblock On completely regular codes.
\newblock {\em Probl. Inf. Transm.}, 55(1):1--45, 2019.

\bibitem{conway1985atlas}
J.~H. Conway.
\newblock {\em Atlas of Finite Groups: Maximal Subgroups and Ordinary
  Characters for Simple Groups}.
\newblock Clarendon Press, 1985.

\bibitem{NTcodesGQs}
D.~Crnkovi{\' c}, D.~R. Hawtin, and A.~{\v S}vob.
\newblock Neighbour-transitive codes and partial spreads in generalised
  quadrangles.
\newblock {\em Des. Codes Cryptogr.}, 90:1521--1533, 2022.

\bibitem{delsarte1973algebraic}
P.~Delsarte.
\newblock {\em An Algebraic Approach to the Association Schemes of Coding
  Theory}.
\newblock Philips research reports: Supplements. N.~V.~Philips'
  Gloeilampenfabrieken, 1973.

\bibitem{dixon1996permutation}
J.~D. Dixon and B.~Mortimer.
\newblock {\em Permutation groups}, volume 163.
\newblock New York: Springer, 1996.

\bibitem{durante2014sets}
N.~Durante.
\newblock On sets with few intersection numbers in finite projective and affine
  spaces.
\newblock {\em Electron. J. Combin.}, pages 4--13, 2014.

\bibitem{gillespieCharNT}
N.~I. Gillespie and C.~E. Praeger.
\newblock Characterisation of a family of neighbour transitive codes.
\newblock {\em Preprint}, (arXiv:1405.5427), 2014.

\bibitem{Giudici1999647}
M.~Giudici and C.~E. Praeger.
\newblock Completely transitive codes in {H}amming graphs.
\newblock {\em European J. Combin.}, 20(7):647 -- 662, 1999.

\bibitem{godsil2013algebraic}
C.~Godsil and G.~F. Royle.
\newblock {\em Algebraic graph theory}, volume 207.
\newblock Springer Science \& Business Media, 2013.

\bibitem{minimal2nt}
D.~R. Hawtin and C.~E. Praeger.
\newblock Minimal binary $2$-neighbour-transitive codes.
\newblock {\em J. Combin. Theory Ser. A}, 171, 2020.

\bibitem{huang2018perfect}
He~Huang, Binzhou Xia, and Sanming Zhou.
\newblock Perfect codes in {C}ayley graphs.
\newblock {\em SIAM J. Discrete Math.}, 32(1):548--559, 2018.

\bibitem{krotov2016perfect}
D.~S. Krotov.
\newblock Perfect codes in {D}oob graphs.
\newblock {\em Des. Codes Cryptogr.}, 80(1):91--102, 2016.

\bibitem{liebler2014neighbour}
R.~A. Liebler and C.~E. Praeger.
\newblock Neighbour-transitive codes in {J}ohnson graphs.
\newblock {\em Des. Codes Cryptogr.}, 73(1):1--25, 2014.

\bibitem{neunhoffer2014sporadic}
M.~Neunh{\"o}ffer and C.~E. Praeger.
\newblock Sporadic neighbour-transitive codes in {J}ohnson graphs.
\newblock {\em Des. Codes Cryptogr.}, 72(1):141--152, 2014.

\end{thebibliography}
\end{document}